 \newtheorem{theorem}{Theorem}
 \newtheorem{lemma}{Lemma}[section]
 \newtheorem{corollary}[lemma]{Corollary}
 \newtheorem{proposition}[lemma]{Proposition}
 \newtheorem{remark}[lemma]{Remark}
\def\Carre#1#2{\vbox{
   \hrule height .#2pt
   \hbox{\vrule width .#2pt height #1pt \kern #1pt
      \vrule width .#2pt}
   \hrule height .#2pt}}
\def\Om{\Omega}
\def\e{\varepsilon}
\def\D{\mathcal{D}}
\def\H{\mathcal{H}}
\def\E{\mathcal{E}}
\def\R{\mathbb{R}}
\def\H{\mathcal{H}}
\def\LM#1{\hbox{\vrule width.2pt \vbox to#1pt{\vfill \hrule width#1pt
height.2pt}}}
\def\LL{{\mathchoice {\>\LM7\>}{\>\LM7\>}{\,\LM5\,}{\,\LM{3.35}\,}}}
\def\restr{{\LL}}
\def\dom{\textup{dom}\,}
\def\rot{{\rm rot\,}}
\def\Div{\textup{div}\,}
\def\d{{\rm d}}
\def\ov{\overline}
\def\half{\frac{1}{2}}
\def\argmin{\textup{arg\,min}}
\def\wtos{\stackrel{*}{\rightharpoonup}}
\begin{document}

\title{On the gradient flow of a one-homogeneous functional}

\author{Ariela Briani  
\thanks{\it LMPT, F\'ed\'eration Denis Poisson, Universit\'e Fran\c cois Rabelais, CNRS, Parc de Grandmont, 37200 Tours, France. 
ariela.briani@lmpt.univ-tours.fr} 
\and Antonin Chambolle
\thanks{\it
CMAP, Ecole Polytechnique, CNRS, 91128 Palaiseau, France. \newline 
antonin.chambolle@cmap.polytechnique.fr}
\and Matteo Novaga
\thanks{\it
Dipartimento di Matematica, Universit\`a di Padova, via Trieste 63, 35121 Padova, Italy. \newline novaga@math.unipd.it}
\and Giandomenico Orlandi
\thanks{\it
Dipartimento di Informatica, Universit\`a di Verona, strada le Grazie 15, 37134 Verona, Italy. \newline giandomenico.orlandi@univr.it}
} 

\date{}

\maketitle

\begin{abstract}
We consider the gradient flow of a one-homogeneous functional, whose
dual involves the derivative of a constrained scalar function.
We show in this case that the gradient flow is related to a weak,
generalized formulation of a Hele-Shaw flow. The equivalence follows
from a variational representation, which is a variant of well-known
variational representations for the Hele-Shaw problem.
As a consequence we get existence and uniqueness of a weak solution to
the Hele-Shaw flow.
We also obtain an explicit representation for the Total Variation
flow in one dimension, and easily deduce basic qualitative properties,
concerning in particular the ``staircasing effect''.
\end{abstract}

\section{Introduction}
This paper deals with the $L^2$-gradient flow of the functional
\begin{eqnarray*}
J_k(\omega) := \int_A |\d \omega|\,dx \qquad k\in \{0,\ldots, N-1\}
\end{eqnarray*}
defined on differential forms $\omega\in L^2(A,\Om^k(\R^N))$, where $A\subseteq\R^N$ is an open set. We will focus on the particular case $k=N-1$: in
that case, the dual variable is a scalar and this yields very particular
properties of the functional $J_k$ and the associated flow.

Notice that, when $k=0$, the functional $J_0$
reduces to the usual total variation.
When $k=N-1$ 
we can identify by duality $\omega\in L^2(A,\Om^{N-1}(\R^N))$
with a vector field $u\in L^2(A,\R^N)$, so that $J_{N-1}$
is equivalent to the functional 
\begin{equation}\label{divu}
\D(u) := \int_A|\Div u|\,dx 
\end{equation}
that is, the total mass of $\Div u$ as a measure. 

The gradient flow of $\D$ has interesting properties:
we show in particular that it is equivalent to a constrained variational
problem for a function $w$ such that $\Delta w = \Div u$. 
Moreover, under some regularity assumption on the initial datum $u_0$,
such a variational problem allows to define a
weak formulation of the Hele-Shaw flow~\cite{ElJa,Gu} 
(see also \cite{KM} for a viscosity formulation).
Therefore, it turns out that the flow of \eqref{divu} provides a
(unique) global weak solution to the Hele-Shaw flow, for a suitable
initial datum $u_0$.
But our formulation allows us to consider quite general initial data $u_0$,
for which for instance  $\Div u_0$ may change sign, or be a measure.

The plan of the paper is the following:
in Section \ref{secgrad} we introduce the general functional we are
interested in, we write the Euler-Lagrange equation for its
Moreau-Yosida approximation and, in Section \ref{secdual},
we express it in a dual form that will be  the base of our analysis. 

In Section \ref{seccase} we focus on the case $k=1$ which is analyzed
in this paper. We show many interesting properties of the flow: comparison,
equivalence with a weak Hele-Shaw flow if the initial datum is 
smooth enough, and qualitative behavior when the initial datum is not
smooth. In Section \ref{secanti} we observe that, in dimension $2$,
the case $k=N-1$ also covers the flow of the $L^1$-norm of the rotation
of a vector field, which appears as a particular limit of the Ginzburg-Landau model
(see~\cite{Orlandietal,SS} and references therein).

Another interesting consequence of  our analysis is that it yields
simple but original qualitative results on the solutions of the
Total Variation flow in dimension one (see also~\cite{BeCaNo,BoFi}). We
show in Section \ref{secone}  that the denoising of a noisy signal
with this approach will, in general, almost surely produce a
solution which is ``flat'' on a dense set. This undesirable artefact
is the well-known ``staircasing'' effect of the Total Variation regularization
and is the main drawback of this approach for signal or image
reconstruction.

\section{Gradient flow}\label{secgrad}
Given an initial datum $\omega_0\in L^2(A,\Om^k(\R^N))$, 
the general theory of \cite{Brezis} guarantees the existence of a global weak solution 
$\omega\in L^2([0,+\infty),L^2(A,\Om^k(\R^N)))$ of the gradient flow equation of $J_k$:
\begin{equation}\label{eqevol}
\omega_t \in -\partial J_k(\omega) \qquad t\in [0,+\infty)\,,
\end{equation}
where $\partial J_k$ denotes the subgradient of the convex functional $J_k$.
Given $\e>0$ and $f\in L^2(A,\Om^k(\R^N))$, we consider the minimum problem 
\begin{equation}\label{mainfun}
\min_{\omega:A\to \R^N}J_k(\omega)\,+\, \int_A \frac{1}{2\e}|\omega- f|^2\, dx .
\end{equation}
Notice that
\[
\min_{\omega:A\to \R^N}J_k(u) + \int_A\frac{|\omega- \e f|^2}{2}\, dx 
= \e \min_{\omega:A\to \R^N}J_k(u) + \int_A\frac{1}{2\e}|\omega- f|^2\, dx .
\]
The Euler-Lagrange equation corresponding to \eqref{mainfun}
is 
\[
\e (f-\omega) \in \partial J_k(\omega),
\]
that is there exists a $(k+1)$-form $v$ with $|v|=1$ such that $v= \d \omega/|\d \omega|$ 
if $\d \omega\ne 0$, and
\begin{equation}\label{eqeqdual}
\e (f-\omega) = \d^*v \ {\rm in\ }A
\qquad {\rm and} \qquad (*v)_T=0
\ {\rm on\ }\partial A.
\end{equation}

\subsection{Dual formulation}\label{secdual}
Equation \eqref{eqeqdual} is equivalent to 
\[
\omega \in \partial J_k^*(\e(f-\omega)),
\]
where 
\[
J_k^*(\eta) := \sup_{w:A\to \R^N} \int_A \eta\cdot w\, dx - J_k(w)
= \left\{ \begin{array}{ll}
0 & {\rm if\ }\| \eta\|_*\le 1
\\
+\infty & {\rm otherwise}
\end{array}
\right.
\]
and
\[
\|\eta\|_* = \sup\left\{ \int_A\eta\cdot w\, dx:\ J_k(w)\le 1\right\}.
\]
Note that 
\[
J_k(w) + J_k^*(\eta) \ge \int_A w\cdot \eta\,dx
\]
for all $w,\eta$. The equality holds iff
$\int_A\eta\cdot w\, dx= J_k(w)$, and in such case we have $\|\eta\|_*\le 1$.

Letting $u$ be a minimizer of \eqref{mainfun} and $\eta=(f-u)/\e$
we then get 
\[
\int_A u\cdot \frac{f-u}{\e}\,dx = J_k(u),
\]
which implies 
\[
J_k^*\left(\frac{f-u}{\e}\right)=0 \qquad {\rm that\ is} \qquad
\e\ge \|f-u\|_*\,.
\]
In particular, we showed the following (see also \cite{Meyer} for
the same result in the case of the Total Variation).

\begin{proposition}
The function $u=0$ is a minimizer of \eqref{mainfun} if and only if
\begin{equation}\label{stimaeps}
\e \ge \e_c := \|f\|_*\,.
\end{equation}
\end{proposition}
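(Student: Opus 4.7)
The forward implication $(\Rightarrow)$ is already essentially proved in the discussion preceding the statement. Indeed, if $u$ is any minimizer of \eqref{mainfun}, the calculation on the few lines above the proposition shows that $J_k^*((f-u)/\e)=0$, i.e.\ $\|f-u\|_*\le\e$. Specialising to $u=0$ gives precisely \eqref{stimaeps}, so I would only need to quote that reasoning with $u=0$ plugged in.

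For the converse $(\Leftarrow)$, my plan is to verify directly that the candidate $u=0$ is a minimizer, using only the definition of $\|\cdot\|_*$ and the one-homogeneity of $J_k$. Abbreviating $E_\e(\omega):=J_k(\omega)+\frac{1}{2\e}\int_A|\omega-f|^2\,dx$ and expanding the square, one finds
\[
E_\e(\omega)-E_\e(0) \;=\; J_k(\omega) \,-\, \frac{1}{\e}\int_A \omega\cdot f\,dx \,+\, \frac{1}{2\e}\int_A |\omega|^2\,dx.
\]
From the definition of the dual norm, together with the one-homogeneity of $J_k$, the inequality $\int_A \omega\cdot f\,dx \le \|f\|_*\, J_k(\omega)$ holds for every $\omega$: if $J_k(\omega)>0$ one rescales $\omega$ so that $J_k$ equals $1$; if $J_k(\omega)=0$, then $J_k(\lambda\omega)=0\le 1$ for every $\lambda\in\R$, and finiteness of $\|f\|_*$ (otherwise the hypothesis $\e\ge\|f\|_*$ is vacuous) forces $\int_A\omega\cdot f\,dx=0$. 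Combining this bound with $\e\ge\|f\|_*$ gives $\frac{1}{\e}\int_A\omega\cdot f\,dx \le J_k(\omega)$, whence $E_\e(\omega)\ge E_\e(0)+\frac{1}{2\e}\int_A|\omega|^2\,dx\ge E_\e(0)$, as required.

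There is no real obstacle here; the statement is a clean convex-duality fact, of the same flavour as the classical characterisation of when the resolvent of a $1$-homogeneous functional returns zero (the "soft-thresholding" criterion for the Total Variation in \cite{Meyer}). The only minor point to keep track of is the degenerate case $J_k(\omega)=0$ in the dual-pairing bound, which is dispatched by the homogeneity argument above. As an alternative one could express everything through $\e f\in \partial J_k(0)\Leftrightarrow 0\in\partial J_k^*(\e f)$ and use that $J_k^*$ is the indicator of $\{\|\cdot\|_*\le 1\}$, but the direct computation above keeps the proof shorter and self-contained.
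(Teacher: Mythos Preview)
Your proof is correct and essentially matches the paper's approach. The paper does not give a separate proof of the proposition: the forward direction is exactly the computation you quote, and the converse is left implicit in the duality discussion (namely, $u=0$ minimizes iff $f/\e\in\partial J_k(0)$, and $\partial J_k(0)=\{\|\cdot\|_*\le 1\}$ since $J_k^*$ is the indicator of that set). Your direct energy comparison for $(\Leftarrow)$ is a minor and perfectly fine reformulation of this — indeed, the inequality $\int_A\omega\cdot f\,dx\le\|f\|_*J_k(\omega)$ is precisely the statement $J_k^*(f/\e)=0\Rightarrow f/\e\in\partial J_k(0)$ — and you even flag the subdifferential route as the alternative at the end.
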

Note that $\|\eta\|_*<\infty$ implies that 
\[
\int_A \eta w =0
\]
for all $w$ such that $\d w=0$. 
By Hodge decomposition, this implies that $\eta =\d^*g$ for some $2$-form $g$, 
with $g_N=0$ on $\partial A$.
It follows that 
\begin{equation}\label{eqpsi}
\|\eta\|_* 
= 
\sup_{\int_A |\d w| \le 1} \int_A \d^* g\cdot w\, dx
=
\sup_{\int_A |\d w| \le 1} \int_A g\cdot \d w\, dx + \int_{\partial A} w \wedge *g_N
= \sup_{\int_A |\d w| \le 1} \int_A g\cdot \d w\, dx.
\end{equation}
We then get
\[
\|\eta\|_* 
= \inf_{\underset{g_N|_{\partial A}=0}{\d^*g=\eta}} \, \| g\|_{L^\infty(A)}.
\]
Indeed, it is immediate to show the $\le$ inequality.
On the other hand, by Hahn-Banach Theorem, there exists a form $g'$, 
with $\d^*g'=\d^* g=\eta$ such that 
\[
\|\eta\|_* 
= 
\sup_{\int_A |\d w| \le 1} \int_A g\cdot \d w\, dx 
= \sup_{\int_A |\psi| \le 1} \int_A g'\cdot \psi\, dx
= \| g\|_{L^\infty(A)}.
\]
Fix now $\phi_0$ such that $\d^*\phi_0 =\eta$. 
We can write $g= \phi_0+\d^*\psi$, so that \eqref{eqpsi} becomes
\begin{equation}\label{eqinfty}
\|\eta\|_* = \min_{\psi:\,(\phi_0+\d^*\psi)\cdot\nu^{}_{A}=0}\| \phi_0+\d^*\psi\|_{L^\infty(A)}.
\end{equation}
The Euler-Lagrange equation of \eqref{eqinfty} is similar to the  
\emph{infinity laplacian equation}
\[
{\rm d}_\infty (\phi_0+\d^*\psi) = 0.
\]
By duality problem \eqref{eqinfty} becomes
\begin{equation}\label{eqinftytre}
\min_{\psi\in W^{1,\infty}_0(A)}\| \nabla \psi + \phi_0\|_{L^\infty(A)},
\end{equation}
and the corresponding Euler-Lagrange equation is
\begin{equation}\label{ELtre}
\langle (\nabla^2\psi + \nabla\phi_0)(\nabla\psi +\phi_0),(\nabla\psi +\phi_0)\rangle = 0.
\end{equation}

\section{The case $k=N-1$}\label{seccase}

In this case, we recall that we are considering the gradient flow
of the functional~\eqref{divu}, which is defined, for any
$u\in L^1_{\textup{loc}}(A;\R^N)$, as follows
\begin{equation}\label{dualdivu}
\D(u)\ =\ \sup\left\{
\int_A -u\nabla v\,dx\ :\ v\in C_c^\infty(A)\,, |v(x)|\le 1\ \forall
x\in A\right\}.
\end{equation}
This is finite if and only if the distribution $\Div u$ is
a bounded Radon measure in $A$. We now see it as a (convex,
l.s.c., with values in $[0,+\infty]$) functional over the Hilbert space
$L^2(A;\R^N)$: it is then clear from~\eqref{dualdivu} that
it is the support function of
\[
K\ =\ \left\{ -\nabla v\,:\, v\in H^1_0(A;[-1,1])\right\}
\]
and in particular $p\in \partial \D(u)$, the subgradient
of $\D$ at $u$, if and only if $p\in K$ and
$\int_A p\cdot u\,dx=\D(u)=\int_A |\Div u|$:
\[
\partial \D(u)\ =\ \left\{
-\nabla v\,:\, v\in H^1_0(A;[-1,1])\,,\ \int_A -\nabla v\cdot u\,dx
\,=\, \int_A |\Div u| \right\}\,.
\]


We can define, for $u\in \dom\D$, the Radon-Nikodym density
\[
\theta_{\Div u}(x)\ =\ \frac{\Div u}{|\Div u|}(x)
\ =\ \lim_{\rho\to 0} \frac{\int_{B(x,\rho)}\Div u}{\int_{B(x,\rho)}|\Div u|},
\]
which exists $|\Div u|$-a.e.~(we consider that it is defined only when
the limit exists and is in $\{-1,1\}$), and is such that
$\Div u=\theta_{\Div u} |\Div u|$. We can also introduce the
Borel sets
\[
\E^\pm_u\ =\ \left\{ x\in A\,:\, \theta_{\Div u}(x)=\pm 1\right\}\,.
\]
Then, we have:
\begin{lemma}\label{characsubgrad}
\[
\partial\D(u)\ =\ \left\{-\nabla v\,:\, v\in H^1_0(A;[-1,1])\,,
v=\pm 1\ \textup{ $| \Div u|-$ a.e.~on}\ \E^\pm_u\right\}\,.
\]
\end{lemma}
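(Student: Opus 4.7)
My plan is to start from the characterization of the subdifferential displayed just before the lemma, namely that $-\nabla v \in \partial\D(u)$ exactly when $v\in H^1_0(A;[-1,1])$ and $\int_A -\nabla v\cdot u\,dx = \int_A|\Div u|$. The goal is then to rewrite that extremality condition using an integration-by-parts formula, so that it becomes an equality between two measure integrals, and read off the pointwise conclusion.

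The key step is the Green--Gauss identity
\[
\int_A -\nabla v\cdot u\,dx\ =\ \int_A v\,d(\Div u)\ =\ \int_A v\,\theta_{\Div u}\,d|\Div u|,
\]
valid for $u\in L^2(A;\R^N)$ with $\Div u$ a bounded Radon measure and $v\in H^1_0(A)\cap L^\infty(A)$. For $v\in C^\infty_c(A)$ this is the very definition of $\Div u$ as a distribution; for general $v$ I would approximate by smooth compactly supported $v_n$ obtained by truncation and convolution, so that $v_n\to v$ in $H^1_0$ with $\|v_n\|_\infty\le 1$. The left-hand side then converges by $L^2$-duality, and (up to a subsequence) so does the right-hand side, either by using a quasi-continuous representative of $v$ (noting that a subsequence converges quasi-everywhere and that bounded convergence handles the measure integral) or by an Anzellotti-type pairing. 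This justification is the main technical obstacle, but it is essentially standard once one chooses the appropriate representative.

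Once the integration-by-parts is in place, the extremality condition becomes
\[
\int_A v\,\theta_{\Div u}\,d|\Div u|\ =\ \int_A d|\Div u|,\qquad\textup{i.e.}\qquad \int_A\bigl(1-v\,\theta_{\Div u}\bigr)\,d|\Div u|\ =\ 0.
\]
Since $|v|\le 1$ and $|\theta_{\Div u}|=1$ $|\Div u|$-a.e., the integrand is nonnegative $|\Div u|$-a.e., so the vanishing integral forces $v\,\theta_{\Div u}=1$ $|\Div u|$-a.e. Because $\theta_{\Div u}=\pm 1$ on $\E^\pm_u$ and $|\Div u|(A\setminus(\E^+_u\cup\E^-_u))=0$, this is exactly the condition $v=\pm 1$ $|\Div u|$-a.e.~on $\E^\pm_u$, giving the stated characterization. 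Conversely, any $v\in H^1_0(A;[-1,1])$ satisfying this pointwise condition saturates the integration-by-parts inequality and thus lies in $\partial\D(u)$.
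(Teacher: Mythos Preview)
Your proposal is correct and follows essentially the same route as the paper's proof: both establish the integration-by-parts identity $-\int_A \nabla v\cdot u\,dx=\int_A v\,d(\Div u)$ by approximating $v$ with smooth compactly supported functions converging quasi-everywhere, use that $|\Div u|\in H^{-1}(A)$ does not charge sets of zero $H^1$-capacity so that the convergence is also $|\Div u|$-a.e., apply dominated convergence, and then read off the pointwise condition $v\theta_{\Div u}=1$ from the resulting equality. The paper is simply a bit more explicit about the capacitary facts (citing Adams--Hedberg), while you spell out the final nonnegativity argument more carefully.
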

\begin{proof}
Consider  $v\in H^1_0(A;[-1,1])$. Then we know~\cite{AdamsHedberg}
that it is the limit of smooth functions $v_n\in C_c^\infty(A;[-1,1])$
with compact support which converge to $v$ quasi-everywhere (that is,
up to a set of $H^1$-capacity zero).

We recall that when $u\in L^2(A;\R^N)$, the measure $\Div u\in H^{-1}(A)$
must vanish on sets of $H^1$-capacity $0$~\cite[\S 7.6.1]{AdamsHedberg}:
it follows that
$v_n\to v$ $|\Div u|$-a.e.~in $A$. Hence, by Lebesgue's convergence
theorem,
\[
-\int_A \nabla v(x)u(x)\,dx
\ =\ \lim_{n\to \infty}\int_A v_n(x)\theta_{\Div u}(x)|\Div u|(x)
\ =\ \int_A v(x)\Div u (x)\,.
\]
It easily follows that if $v=\pm 1$ $| \Div u |$- a.e.~on $\E^{\pm}_u$,
$-\nabla v\in\partial\D(u)$ and conversely, that
if $v\in\partial\D(u)$ then $v=\pm 1$ $|\Div u|$-a.e.~on $\E^{\pm}_u$.
\end{proof}

We now define, provided $u\in\dom\partial \D$
(i.e.,~$\partial \D(u)\neq \emptyset$),
\[
\partial^0 \D(u)
\ =\ \argmin\left\{ \int_A |p|^2\,dx\,:\, p\in \partial \D(u)\right\}\,:
\]
it corresponds to the element $p=-\nabla v\in \partial \D(u)$
of minimal $L^2$-norm. Using Lemma~\ref{characsubgrad}, equivalently,
$v$ is the function which minimizes $\int_A |\nabla v|^2\,dx$ among
all $v\in H^1_0(A)$ with $v\ge \chi_{\E^+_u}$ and $v\le -\chi_{\E^-_u}$,
 $|\Div u|$-a.e.: 
 in particular, we deduce that
it is harmonic in $A\setminus \ov{\E^+_u\cup \E^-_u}$.

Let us now return to the flow~\eqref{eqevol}. In this setting,
it becomes
\begin{equation}\label{eqdiv}
\left\{\begin{array}{ll}
u_t &=  \nabla v
\\
u(0) &= u_0
\end{array}\right.
\end{equation}
where $v$ satisfies $|v|\le 1$ and 
\[
\D(u)+\int_A u\cdot \nabla v =0.
\]
It is well know, in fact, that the solution of~\eqref{eqdiv}
is unique and that $-\nabla v(t)= \partial^0\D(u(t))$ is the
right-derivative of $u(t)$ at any $t\ge 0$~\cite{Brezis}.
Given the solution $(u(t),v(t))$ of \eqref{eqdiv}, we let
\[
w(t)\ :=\ \int_0^t v(s)\,ds,
\]
which takes its values in $[-t,t]$. We have 
\[
u(t)= u_0+\nabla w(t).
\]
\begin{theorem}
Assume $u_0\in L^2(A;\R^N)$.
The function $w(t)$ solves the following obstacle problem
\begin{equation}\label{obst}
\min \left\{ \half\int_A |u_0+\nabla w|^2\,dx\,:\,
w\in H^1_0(A)\,, |w|\le t \textrm{ a.e.}\right\}.
\end{equation}
\end{theorem}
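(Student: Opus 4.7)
The plan is to identify $w(t)$ with the unique minimizer of the strictly convex obstacle problem \eqref{obst} by combining two facts. First, a direct dualization of the Moreau-Yosida resolvent problem
\[
J_t(u_0) \;=\; \argmin_u \Bigl\{\D(u) + \tfrac{1}{2t}\|u-u_0\|_{L^2}^2\Bigr\}
\]
rewrites it as the obstacle problem \eqref{obst}. Second, for a 1-homogeneous convex l.s.c.~functional such as $\D$, the gradient-flow semigroup coincides with the resolvent at every positive time: $u(t) = J_t(u_0)$. Granting both, $u(t) = u_0 + \nabla w^*$ with $w^*$ the unique obstacle minimizer; comparison with the decomposition $u(t) = u_0 + \nabla w(t)$ together with the fact that $w(t)$ and $w^*$ both lie in $H^1_0(A)$ will force $w(t) = w^*$.

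To carry out the dualization, I would start from the representation \eqref{dualdivu}, namely $\D(u) = \sup\{-\int_A u\cdot\nabla v\,dx : v\in H^1_0(A),\,|v|\le 1\}$, and apply Fenchel-Rockafellar to swap $\min_u$ with $\sup_v$. The inner minimization in $u$ of $-\int_A u\cdot\nabla v\,dx + \frac{1}{2t}\|u-u_0\|^2$ is explicit and attained at $u = u_0 + t\nabla v$. Substituting back, completing the square, and setting $w = tv$ (so that $v \in H^1_0(A)$ with $|v|\le 1$ corresponds bijectively to $w \in H^1_0(A)$ with $|w|\le t$), one obtains
\[
\min_u\Bigl\{\D(u) + \tfrac{1}{2t}\|u-u_0\|_{L^2}^2\Bigr\} \;=\; \tfrac{1}{2t}\|u_0\|_{L^2}^2 \;-\; \tfrac{1}{t}\min\Bigl\{\tfrac{1}{2}\int_A|u_0+\nabla w|^2\,dx \,:\, w\in H^1_0(A),\,|w|\le t\Bigr\},
\]
the unique minimizer $w^*$ of the right-hand side satisfying $J_t(u_0) = u_0 + \nabla w^*$.

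For the identification $u(t) = J_t(u_0)$, I would invoke the classical ``resolvent-equals-flow'' property of 1-homogeneous convex functionals (cf.~\cite{Brezis}): the $0$-homogeneity $\partial\D(\lambda u) = \partial\D(u)$ for $\lambda > 0$, a direct consequence of $\D(\lambda u) = \lambda\D(u)$, implies the semigroup law $J_s \circ J_t = J_{s+t}$ for the resolvents of $\partial\D$; combined with the Crandall-Liggett exponential formula $S(t)u_0 = \lim_n J_{t/n}^n u_0$, this yields $u(t) = S(t)u_0 = J_t(u_0)$. I expect this identification --- specific to 1-homogeneous functionals and failing for generic convex $\D$ --- to be the main non-routine ingredient, whereas the dualization in the preceding paragraph is a mostly mechanical calculation already carried out in spirit in Section~\ref{secdual}.
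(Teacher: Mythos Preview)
Your dualization of the single resolvent problem is correct and indeed matches one step of the paper's implicit Euler scheme: if $u^*=J_t(u_0)$ minimizes $\D(u)+\tfrac{1}{2t}\|u-u_0\|^2$, then $u^*=u_0+\nabla w^*$ with $w^*$ the unique minimizer of~\eqref{obst}. The gap lies entirely in the second ingredient.

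You assert that for any $1$-homogeneous convex functional the resolvent equals the semigroup, $S(t)=J_t$, and justify this by ``the $0$-homogeneity $\partial\D(\lambda u)=\partial\D(u)$ for $\lambda>0$ implies the semigroup law $J_s\circ J_t=J_{s+t}$''. This implication is not established. Zero-homogeneity relates $\partial\D$ at \emph{proportional} points only; to conclude $J_s\circ J_t=J_{s+t}$ you would need $p_1:=\tfrac{u_0-u_1}{t}\in\partial\D(u_1)$ to also belong to $\partial\D(u_2)$, where $u_1=J_t(u_0)$ and $u_2=J_s(u_1)$ are in general \emph{not} scalar multiples of one another. No such inclusion follows from $1$-homogeneity, and no such statement appears in~\cite{Brezis}. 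In fact the paper's own Remark~\ref{remsemigroup} records the identity $u(t)=J_t(u_0)$ as a \emph{consequence} of the Theorem for this particular functional, and explicitly notes that for the Total Variation flow --- another $1$-homogeneous functional --- the same identity is only known when $u_0$ is the characteristic function of a convex set~\cite{AlterCasellesChambolle}. So you are invoking as a general principle precisely the special fact that the Theorem is meant to establish.

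The paper avoids this circularity by working at the discrete level. Setting $u_{n+1}=J_\e(u_n)$ and $w_n=\e\sum_{i=1}^n v_i$ (so $u_n=u_0+\nabla w_n$), one sees immediately that $w_n$ minimizes $\int_A|u_0+\nabla w|^2$ under the \emph{local} constraint $|w-w_{n-1}|\le\e$. The substantive step is to show that $w_n$ is also the minimizer under the \emph{global} constraint $|w|\le n\e$. This is done by a comparison argument: if $\hat w_n$ denotes the global minimizer, then $\min\{\hat w_n,\hat w_{n+1}+\e\}$ and $\max\{\hat w_n,\hat w_{n+1}-\e\}$ are admissible competitors for $\hat w_n$, forcing $|\hat w_{n+1}-\hat w_n|\le\e$ and hence $\hat w_n=w_n$ by induction. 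It is the lattice structure of the \emph{scalar} constraint $|w|\le t$ (pointwise $\min$ and $\max$ preserve admissibility) that makes this work --- a structural feature of this specific functional, not a consequence of $1$-homogeneity.
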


Observe that in case we additionally have
$\Div u_0\ge \alpha> 0$, this obstacle problem
is well-known for being an equivalent formulation of the Hele-Shaw problem, see~\cite{ElJa,Gu}.

\begin{proof}
Given $u_{0}\in L^2(A;\R^N)$, we can recursively
define $u_{n+1}\in L^2(A;\R^N)$ as the unique solution of the minimum problem
\[
\min_{u\in L^2(A,\R^N)} \D_\e(u,u_{n}),
\]
where 
\[
\D_\e(u,v)= \D(u)\,+\, \int_A \frac{1}{2\e}|u- v|^2\, dx .
\]
Then, there exists $v_{n+1}\in\partial \D(u_{n+1})$ such that
\begin{equation}\label{eqvn}
u_{n+1}-u_n-\e \nabla v_{n+1}=0.
\end{equation}
It follows that $v_{n+1}\in H^1_0(A)$ minimizes the functional
\[
\int_A |u_n + \e\nabla v|^2\,dx
\]
under the constraint $|v|\le 1$. Let now 
\[
w_n := \e\sum_{i=1}^n v_i.
\]
The from \eqref{eqvn} we get
\begin{equation}\label{eqwn}
u_n=u_0 + \nabla w_{n},
\end{equation}
and $w_n$ minimizes the functional
\begin{equation}\label{eqfunw}
\int_A |u_0 + \nabla w|^2\,dx
\end{equation}
under the constraint $|w-w_{n-1}|\le \e$. 
Notice that $|w_n-w_{n-1}|\le \e$ for all $n$ implies 
\begin{equation}\label{conw}
|w_n|\le n\e.
\end{equation}
We now show that $w_n$ minimizes \eqref{eqfunw} also under the 
weaker constraint \eqref{conw}.
Indeed, letting $\hat w_n$ be the minimizer of \eqref{eqfunw} 
under the constraint \eqref{conw}, we have
\[
\hat w_n-\e\le \hat w_{n+1}\le \hat w_n+\e,
\] 
which follows by noticing that $\min\{ \hat w_n,\hat w_{n+1}+\e\}$
and $\max\{ \hat w_n,\hat w_{n+1}-\e\}$ minimize \eqref{eqfunw}, hence 
they are both equal to $\hat w_n$.
It then follows $w_n = \hat w_n$ for all $n$. 

\smallskip

Passing to the limit in $n$ we get the corresponding result in the continuum case.
\end{proof}

\begin{remark}\label{remsemigroup}\textup{
The previous proof also shows that for any initial $u_0\in L^2(A;\R^N)$,
$u(t)=u_0 + \nabla w(t)$ is the unique minimizer of 
$$
\int_A |\Div u| + \frac{1}{2t}|u-u_0|^2\,dx.
$$
We recall
that obviously,
such property does not hold for general semigroups generated by the
gradient flow of a convex function. It is shown in \cite{AlterCasellesChambolle}
to be the case for the Total Variation flow, in any dimension, when
the initial function is the characteristic of a convex set.}
\end{remark}

\medskip

\subsection{Some properties of the solution}

A first observation is that $t\mapsto w(t)$ is continuous (in 
$H^1_0(A)$, strong), as follows both from the study of the
varying problems~\eqref{obst} and from the fact that the
flow $u(t)=u_0+\nabla w(t)$ is both continuous at zero
and $L^2(A)$-Lipschitz continuous away from $t=0$ (and up
to $t=0$ if $u_0\in \dom\partial \D$).

In fact, one can check that $w$ is also $L^\infty$-Lipschitz
continuous in time: indeed, it follows from the comparison principle
that for any $s\le t$,
\begin{equation}\label{wlip}
w(s)-t+s\ \le \ w(t)\ \le\ w(s)+t-s
\end{equation}
a.e.~in $A$, hence $\|w(t)-w(s)\|_{L^\infty(A)}\le |t-s|$.
The comparison~\eqref{wlip} is obtained by adding the
energy in~\eqref{obst} of $\min\{w(t),w(s)+t-s\}$ (which is
admissible at time $t$ and hence should have an energy larger
than the energy of $w(t)$) to the energy of $\max\{w(t)-t+s,w(s)\}$
(which is admissible at time $s$), and checking that this sum is
equal to the energy at time $t$ plus the energy at time $s$.
This is quite standard, see~\cite{Caf,KM}.

In particular, we can define for any $t$ the sets
\begin{equation}\label{defEpm}
E^+(t)\ =\ \{ \tilde{w}(t)=t\}
\ \textup{ and }\ E^-(t)\ =\ \{\tilde{w}(t)=-t\}\,,
\end{equation}
where $\tilde w(t)$ is the precise representative of $w(t)\in H^1(A)$,
 defined quasi-everywhere
by
\begin{equation}\label{precise}
\tilde w(t,x) 
\ =\ \lim_{\rho\to 0} \frac{1}{\omega_N\rho^N}\int_{B(x,\rho)}w(t,y)dy
\end{equation}
($\omega_N$ is the volume of the unit ball). It follows
from~\eqref{wlip} and~\eqref{precise}
that if $\tilde{w}(t,x)=t$, then for any $s<t$,
$x$ is also a point where $\tilde{w}(s,x)$ is well-defined, and its
value is $s$; similarly if $\tilde{w}(t,x)=-t$ then $\tilde{w}(s,x)=-s$.
Hence: the functions $t\mapsto E^+(t)$,  $t\mapsto E^-(t)$ are nonincreasing.

Also, if $s<t$, one has from~\eqref{wlip}
\[
\frac{1}{\omega_N\rho^N}\int_{B(x,\rho)}w(s,y)dy\,-t+s
\le \frac{1}{\omega_N\rho^N}\int_{B(x,\rho)}w(t,y)dy
\le
\frac{1}{\omega_N\rho^N}\int_{B(x,\rho)}w(s,y)dy\,-s+t
\]
so that if $x\in E^+(s)$,
\[
2s-t\ \le\ \liminf_{\rho\to 0} \frac{1}{\omega_N\rho^N}\int_{B(x,\rho)}w(t,y)dy
\ \le\ \limsup_{\rho\to 0} \frac{1}{\omega_N\rho^N}\int_{B(x,\rho)}w(t,y)dy
\ \le\ t
\]
and sending $s$ to $t$, we find that if $x\in \bigcap_{s<t} E^+(s)$,
$\tilde{w}(t,x)=t$ and $x\in E^+(t)$: hence these sets (as well as
$E^-(\cdot)$) are left-continuous.

We define
\begin{equation}\label{defEdpm}
E_r^+(t)\ =\ \bigcup_{s>t} E^+(s)\ \subseteq E^+(t)
\ \textup{ and }\ E_r^-(t)\ =\ \bigcup_{s>t} E^-(s)\ \subseteq E^-(t)\,,
\end{equation}
as well as $E(t)=E^+(t)\cup E^-(t)$, $E_r(t)=E_r^+(t)\cup E_r^-(t)$.
Then, there holds the following lemma:
\begin{lemma}\label{comparE}
If $s\le t$, then
\[
E^-(t)\ \subseteq E^-(s) \textup{ and } E^+(t)\ \subseteq E^+(s),
\]
\[
E_r^-(t)\ \subseteq E_r^-(s) \textup{ and } E_r^+(t)\ \subseteq E_r^+(s).
\]
Moreover, for $t>0$, $v(t)=\pm 1$ quasi-everywhere on $E_r^\pm(t)$
and $\E^\pm_{u(t)}\subseteq E_r^\pm(t)$, up to a set  $|\Div u(t)|$-negligible.
In particular 
$$\Div u(t)\restr (E_r^-(t))^c\ge 0, \quad 
\Div u(t)\restr (E_r^+(t))^c\le 0, \quad
\Div u(t)\restr (E_r^+(t)\cup E_r^-(t))^c= 0.
$$

\end{lemma}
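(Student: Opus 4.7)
The lemma has four distinct statements to establish, and my plan is to dispatch them in order. The monotonicity $E^\pm(t)\subseteq E^\pm(s)$ for $s\le t$ is already essentially proved in the paragraph preceding the lemma, as a consequence of the $L^\infty$-Lipschitz bound~\eqref{wlip} and the definition of the precise representative $\tilde w(t)$. The monotonicity for $E_r^\pm$ is then immediate from~\eqref{defEdpm}: if $x\in E_r^+(t)$ and $\sigma>t$ with $x\in E^+(\sigma)$, then $\sigma>s$ as well, so $x\in E_r^+(s)$.

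For the identification $v(t)=1$ q.e.~on $E_r^+(t)$, I would analyze the rescaled difference quotient $\zeta(s):=(w(s)-w(t))/(s-t)$ for $s>t$. Using the semigroup form of the obstacle problem (Remark~\ref{remsemigroup}) and rescaling, $\zeta(s)$ is identified as the unique minimizer, over $\{\zeta\in H^1_0(A)\,:\,|\zeta|\le 1\}$, of
\[
J_s(\zeta)\ :=\ -\int_A \zeta\,d(\Div u(t))\,+\,\frac{s-t}{2}\int_A |\nabla\zeta|^2\,dx.
\]
Now $v(t)$ is itself admissible (by Lemma~\ref{characsubgrad}) and, crucially, already maximizes $\int_A \zeta\,d(\Div u(t))$ among $|\zeta|\le 1$; comparing $J_s(\zeta(s))\le J_s(v(t))$ then forces $\|\nabla\zeta(s)\|_{L^2}\le \|\nabla v(t)\|_{L^2}$ for every $s>t$. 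Combined with $\zeta(s)\to v(t)$ in $L^2$ (which is the very definition of $v(t)$ as the right-derivative of $w$), lower semicontinuity of the $H^1$-norm upgrades weak to strong $H^1_0$-convergence $\zeta(s)\to v(t)$. Passing to a subsequence $s_n\to t^+$ along which precise representatives converge q.e., one notes that $\tilde\zeta(s_n,\cdot)=1$ q.e.~on $E^+(s_n)$ (from $\tilde w(s_n,\cdot)=s_n$ and, since $E^+(s_n)\subseteq E^+(t)$, also $\tilde w(t,\cdot)=t$). For any $x\in E_r^+(t)$, choosing $\sigma>t$ with $x\in E^+(\sigma)$ and using monotonicity to get $x\in E^+(s_n)$ eventually, one concludes $\tilde v(t,x)=1$. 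The case of $E_r^-(t)$ is fully symmetric. I expect this strong $H^1_0$-convergence to be the main obstacle: a careless argument would only give weak convergence in $H^1_0$, but the upgrade is secured precisely because $v(t)$ simultaneously extremizes the linear and quadratic parts of $J_s$.

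For the support statement for $\Div u(t)$, the plan is to combine the Euler-Lagrange decomposition of the obstacle problem with time-continuity of $u$. The KKT conditions for~\eqref{obst} at time $s$ give $\Div u(s)=\lambda^+(s)-\lambda^-(s)$ as the difference of two nonneg Radon measures with $\mathrm{supp}\,\lambda^\pm(s)\subseteq E^\pm(s)$. Continuity $u\in C([0,\infty);L^2(A;\R^N))$ and integration by parts yield, for any $\phi\in H^1_0(A)$,
\[
\int_A \phi\, d(\Div u(s))\ =\ -\int_A \nabla\phi \cdot u(s)\,dx\ \longrightarrow\ \int_A \phi\, d(\Div u(t))\qquad\text{as }s\to t^+.
\]
I would test with nonneg $\phi\in H^1_0(A)\cap L^\infty(A)$ vanishing q.e.~on $E_r^+(t)$: by monotonicity $E^+(s)\subseteq E_r^+(t)$ for $s>t$, so $\phi=0$ q.e.~on $E^+(s)$; since $\lambda^+(s)$, as the positive part of the $H^{-1}$-distribution $\Div u(s)$, does not charge $H^1$-capacity-zero sets, $\int\phi\,d\lambda^+(s)=0$, whence $\int\phi\,d(\Div u(s))\le 0$. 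Passing to the limit gives $\int\phi\,d(\Div u(t))\le 0$, which forces $[\Div u(t)]^+$ to be concentrated on $E_r^+(t)$. A symmetric argument concentrates $[\Div u(t)]^-$ on $E_r^-(t)$; the three restriction identities in the lemma then follow at once, and the inclusion $\E^\pm_{u(t)}\subseteq E_r^\pm(t)$ up to $|\Div u(t)|$-null sets is immediate since $\E^\pm_{u(t)}$ coincides, modulo $|\Div u(t)|$-null sets, with the support of $[\Div u(t)]^\pm$. The secondary technicality here is translating the test-function inequality into a measure-theoretic support statement, which relies on the fact that positive and negative parts of Radon measures belonging to $H^{-1}$ individually do not charge $H^1$-capacity-zero sets, together with a density argument among nonneg quasi-continuous test functions.
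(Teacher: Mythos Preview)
Your proof is correct. The first two parts (monotonicity) match the paper exactly. For the remaining parts your route differs from the paper's in two respects, both worth noting.

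For $v(t)=\pm 1$ q.e.~on $E_r^\pm(t)$, the paper simply invokes Br\'ezis' theory: since $\partial_t^+ u(t)=\nabla v(t)$ in $L^2(A;\R^N)$ and $u=u_0+\nabla w$, one has directly $\nabla\zeta(s)\to\nabla v(t)$ in $L^2$, i.e.\ $\zeta(s)\to v(t)$ \emph{strongly} in $H^1_0$. Your variational argument (bounding $\|\nabla\zeta(s)\|$ by $\|\nabla v(t)\|$ via $J_s$ and then upgrading weak to strong) is correct but redundant: the ``$L^2$ convergence of $\zeta$'' you start from is already $H^1_0$-strong convergence in disguise. Once strong $H^1_0$ convergence is known, both proofs finish identically via q.e.\ convergence of a subsequence.

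For the inclusion $\E^\pm_{u(t)}\subseteq E_r^\pm(t)$, the paper takes a subgradient route: it shows that \emph{any} $\tilde v\in H^1_0(A;[-1,1])$ with $\tilde v=\pm 1$ on $E_r^\pm(t)$ satisfies $-\nabla\tilde v\in\partial\D(u(t))$, by first observing $-\nabla\tilde v\in\partial\D(u(s))$ for $s>t$ (since $\E^\pm_{u(s)}\subseteq E^\pm(s)\subseteq E_r^\pm(t)$) and then letting $s\to t^+$ using lower semicontinuity of $\D$. Lemma~\ref{characsubgrad} then forces $\tilde v=\pm 1$ $|\Div u(t)|$-a.e.\ on $\E^\pm_{u(t)}$, and the arbitrariness of $\tilde v$ gives the inclusion. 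Your approach via the KKT multipliers $\lambda^\pm(s)$ and nonnegative test functions vanishing on $E_r^+(t)$ is a legitimate alternative that reaches the same conclusion; it is perhaps more ``measure-theoretic'' while the paper's is more ``functional-analytic'', but the underlying continuity mechanism ($s\to t^+$) is identical. Both arguments leave the same final density step (producing enough admissible test functions, respectively enough admissible $\tilde v$) somewhat implicit; this is standard once one knows that $(\Div u(t))^\pm\in H^{-1}(A)$ do not charge sets of zero capacity and that $E_r^+(t)$, $E_r^-(t)$ are disjoint.
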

Here, for a Radon measure $\mu$ and a Borel set $E$, $\mu\restr E$ denotes the
measure defined by $\mu\restr E(B):=\mu(E\cap B)$. 
\begin{proof}
The first two assertions, as already observed, follow from~\eqref{wlip}
and the definition of $E^\pm_r$.
We know that the solution of equation~\eqref{eqdiv} satisfies
$\partial^+_t u=-\partial^0\D(u(t))=\nabla v(t)$ for any $t>0$, but the
right-derivative of $u=u_0+\nabla w(t)$ is nothing else as
$\lim_{h\to 0} \nabla [w(t+h)-w(t)]/h$. We easily deduce
that $v(t)=\lim_{h\to 0}[w(t+h)-w(t)]/h$ (which converges in $H^1_0$-strong).
Since when $x\in E_r^+(t)$, $\tilde w(t,x)=t$ and $\tilde w(t+h,x)=t+h$ 
for $h$ small enough,
we deduce that $v(x)=1$ on that set, in the same way $v=1$ on
$E_r^-(t)$.\medskip

Observe that the Euler-Lagrange equation for~\eqref{obst} is
the variational
inequality
\[
\int_A (u_0+\nabla w(t))\cdot(t\nabla v-\nabla w(t))\,dx\ \ge \ 0\,,
\]
for any $v\in H^1_0(A;[-1,1])$. In other words since
$u(t)=u_0 + \nabla w(t)$,
\[
-\int_A u(t)\cdot\nabla \frac{w(t)}{t}\ \ge\ -\int_A u(t)\cdot\nabla v
\]
for any $|v|\le 1$,
and we recover that $-\nabla w(t)/t\in \partial\D(u(t))$.

Hence (using Lemma~\ref{characsubgrad}),
$\E_{u(t)}^\pm\subseteq E^\pm(t)$. Now, if $\tilde{v}\in H^1_0(A;[-1,1])$
with $\tilde{v} =\pm 1 $ on $E_r^\pm$, one deduces that for any
$s>t$,
\[
-\int_A \nabla \tilde{v}\cdot u(s)\,dx\ =\ \D(u(s))\,.
\]
Sending $s\to t$, it follows
\[
-\int_A \nabla \tilde{v}\cdot u(t)\,dx\ \ge\ \D(u(t))\,,
\]
hence $\tilde{v}\in  \partial \D(u(t))$. We deduce that
$\E^\pm_{u(t)}\subseteq E_r^\pm(t)$, invoking Lemma~\ref{characsubgrad}.
\end{proof}


\begin{remark}\textup{We might find situations where $|v(t)|=1$ outside
of the contact set. For instance, assume the problem is radial,
$\Div u_0$ is positive in a crown and negative in the center. Then
one may have that $E^+$ is a crown ($w$ should be less than $t$
at the center) and $E^-$ is empty. In that case, $v$ should be
equal to one also in the domain surrounded by the set $E^+$.}
\end{remark}
\medskip

We show now another simple comparison lemma:
\begin{lemma}\label{lemcomparw}
Let $u_0$ and $u'_0$ in $L^2(A;\R^N)$ such that
\[
\Div u'_0\ \le\ \Div u_0
\]
in $H^{-1}(A)$. Then for any $t\ge 0$, $w'(t)\le w(t)$, where
$w'(t)$ and $w(t)$ are the solutions of the contact problem~\eqref{obst},
the first with $u_0$ replaced with $u'_0$.
\end{lemma}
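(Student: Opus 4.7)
The plan is to compare the two obstacle problems~\eqref{obst} (with data $u_0$ and $u'_0$) by testing each first-order optimality condition against a competitor built from the max (resp.\ min) of $w(t)$ and $w'(t)$, and then to absorb the resulting gradient term by means of the divergence hypothesis, which is paired in the $H^{-1}\!-\!H^1_0$ duality with a nonnegative test function.

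First, I would record the Euler--Lagrange variational inequality satisfied by the minimizer of~\eqref{obst}: for any admissible $z\in H^1_0(A)$ with $|z|\le t$,
$$\int_A (u_0+\nabla w(t))\cdot(\nabla z-\nabla w(t))\,dx\ \ge\ 0\,,$$
and analogously for $w'(t)$ with $u'_0$ in place of $u_0$. This is exactly the type of inequality already used in the proof of Lemma~\ref{comparE}.

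Next, I set $\varphi:=(w'(t)-w(t))^+\in H^1_0(A)$, which is nonnegative, and test the inequality for $w(t)$ with $z=w(t)\vee w'(t)=w(t)+\varphi$ and the inequality for $w'(t)$ with $z'=w(t)\wedge w'(t)=w'(t)-\varphi$; both competitors are admissible, since the bounds $|w(t)|,|w'(t)|\le t$ pass to the pointwise maximum and minimum, and $H^1_0(A)$ is stable under these operations. This yields
$$\int_A(u_0+\nabla w(t))\cdot\nabla\varphi\,dx\ \ge\ 0\,,\qquad -\int_A(u'_0+\nabla w'(t))\cdot\nabla\varphi\,dx\ \ge\ 0\,.$$
Adding and using that $(\nabla w(t)-\nabla w'(t))\cdot\nabla\varphi=-|\nabla\varphi|^2$ a.e.\ on $\{w'(t)>w(t)\}$, while $\nabla\varphi=0$ elsewhere, I obtain
$$\int_A(u_0-u'_0)\cdot\nabla\varphi\,dx\ \ge\ \int_A|\nabla\varphi|^2\,dx\,.$$

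To conclude, the assumption $\Div u'_0\le \Div u_0$ in $H^{-1}(A)$ means precisely that $\langle\Div(u_0-u'_0),\varphi\rangle\ge 0$ for every nonnegative $\varphi\in H^1_0(A)$; since $u_0-u'_0\in L^2(A;\R^N)$, this reads $\int_A(u_0-u'_0)\cdot\nabla\varphi\,dx\le 0$. Combined with the previous inequality, this forces $\int_A|\nabla\varphi|^2\,dx=0$, hence $\varphi\equiv 0$ in $H^1_0(A)$ by Poincar\'e, which is exactly $w'(t)\le w(t)$ a.e. The only step I expect to need some care is the sign convention in the $H^{-1}\!-\!H^1_0$ pairing against a nonnegative test function; the rest of the argument is a routine $T$-comparison for convex obstacle problems.
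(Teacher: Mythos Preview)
Your proof is correct and follows the same core idea as the paper: test each obstacle problem against the competitor $w(t)\vee w'(t)$, resp.\ $w(t)\wedge w'(t)$, and use the sign assumption on $\Div(u_0-u'_0)$ paired against the nonnegative function $(w'(t)-w(t))^+$.

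The one difference is that the paper works with the energies directly and introduces an $\varepsilon$-perturbation: it compares $w(t)$ not with $w'(t)$ but with the minimizer $w^\varepsilon$ of $\tfrac12\int|\nabla w|^2-\int w(\Div u'_0-\varepsilon)$, so that after summing the two energy inequalities the gradient terms cancel exactly and one is left with $\varepsilon\int(w^\varepsilon-w(t))^+\le 0$, forcing the pointwise inequality; one then lets $\varepsilon\to 0$. Your route via the first-order variational inequalities is slightly more direct: the gradient terms do not cancel but instead produce the coercive term $\int|\nabla\varphi|^2$, and you conclude $\varphi=0$ from $\nabla\varphi=0$ in $H^1_0(A)$ rather than from an $L^1$ bound. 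Both are standard variants of the same comparison argument; yours avoids the auxiliary limit but tacitly uses that a function in $H^1_0(A)$ with vanishing gradient is identically zero.
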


\begin{proof}
Let $t>0$, $\e>0$, and $w^\e$ be the minimizer of
\[
\min_{|w|\le t}\half \int_A |\nabla w|^2\,dx \,-\,\int_A w(\Div u'_0-\e)
\]
which of course is unique. We now show that $w^\e\le w(t)$ a.e., and
since $w^\e\to w'(t)$ as $\e\to 0$ the thesis will follow.

We have by minimality
\begin{equation*}
\begin{array}{l}
\displaystyle
 \int_A  \frac{|\nabla w(t)|}{2}^2 dx\, -\, \int_A w(t)(\Div u_0)
\, \le\, \int_A \frac{|\nabla (w(t)\vee w^\e)|}{2}^2 dx
\,-\, \int_A (w(t)\vee w^\e)(\Div u_0)\,,
\\[5mm]
\displaystyle
\int_A \frac{|\nabla w^\e|}{2}^2 dx \,-\,\int_A w^\e(\Div u'_0-\e)
\, \le\,  \int_A \frac{|\nabla (w(t)\wedge w^\e)|}{2}^2 dx
\,-\,\int_A (w(t)\wedge w^\e)(\Div u'_0-\e)\,,
\end{array}
\end{equation*}
where we denote $w(t)\vee w^\e:=\max\{w(t),w^\e\}$ and $w(t)\wedge w^\e :=\min\{w(t),w^\e\}$.
Summing both inequalities we obtain
\[
\int_A (w(t)\vee w^\e -w(t))\Div u_0
\ \le\ \int_A (w^\e-w(t)\wedge w^\e)(\Div u'_0 -\e)\,,
\]
from which it follows $\e\int_A (w^\e-w(t))^+\,dx\le 0$, which
is our claim.
\end{proof}

\begin{corollary}
Under the assumptions of Lemma~\ref{lemcomparw},
\begin{equation}\label{comparsets}
E^-(t)\ \subseteq {E'}^-(t)\textrm{ and }{E'}^+(t)\ \subseteq \ E^+(t)\,,
\end{equation}
and it follows that $v'(t)\le v(t)$, for each $t> 0$.
\end{corollary}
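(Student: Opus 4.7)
The plan is to deduce the set inclusions~\eqref{comparsets} directly from the pointwise inequality $w'(t)\le w(t)$ established in Lemma~\ref{lemcomparw}, and then to derive the comparison for $v$ by a lattice argument applied to a variational characterization of $v(t)$ and $v'(t)$ that combines Lemmas~\ref{characsubgrad} and~\ref{comparE}.

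For the inclusions, I would first observe that $w'(t)\le w(t)$ almost everywhere lifts to the precise representatives defined in~\eqref{precise}, so $\tilde w'(t)\le \tilde w(t)$ quasi-everywhere in $A$. Since both functions are valued in $[-t,t]$, if $\tilde w'(t,x)=t$ then necessarily $\tilde w(t,x)=t$, giving ${E'}^+(t)\subseteq E^+(t)$; symmetrically $E^-(t)\subseteq {E'}^-(t)$. Taking unions over $s>t$ as in~\eqref{defEdpm} then yields the analogous inclusions ${E'}_r^+(t)\subseteq E_r^+(t)$ and $E_r^-(t)\subseteq {E'}_r^-(t)$.

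Next I would identify $v(t)$ as the unique minimizer of $\int_A|\nabla v|^2\,dx$ over the convex set
\[
K_t\,:=\,\bigl\{v\in H^1_0(A)\,:\, v=1\text{ q.e.~on }E_r^+(t),\ v=-1\text{ q.e.~on }E_r^-(t)\bigr\},
\]
the bound $|v|\le 1$ being automatic from the maximum principle. This combines Lemma~\ref{characsubgrad} (which describes $\partial\D(u(t))$), the minimum-norm property $-\nabla v(t)=\partial^0\D(u(t))$, and Lemma~\ref{comparE}, which upgrades the a priori weak constraint $v=\pm 1$ $|\Div u(t)|$-a.e.~on $\E^\pm_{u(t)}$ to the much stronger quasi-everywhere equality on $E_r^\pm(t)$. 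An analogous characterization holds for $v'(t)$ with $K_t'$ defined using ${E'}_r^\pm(t)$.

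The proof would then conclude by a standard truncation argument: using the inclusions from the first step one verifies that $v(t)\vee v'(t)\in K_t$ and $v(t)\wedge v'(t)\in K_t'$ (on $E_r^-(t)\subseteq {E'}_r^-(t)$ both functions equal $-1$, and symmetrically), and combining the minimality of $v(t)$ and $v'(t)$ with the pointwise identity $|\nabla(v\vee v')|^2+|\nabla(v\wedge v')|^2=|\nabla v|^2+|\nabla v'|^2$ forces both minimality inequalities to be equalities; uniqueness in $K_t$ (by strict convexity) then yields $v(t)\vee v'(t)=v(t)$, i.e.~$v'(t)\le v(t)$. The delicate step is the variational characterization of $v(t)$ over $K_t$: Lemma~\ref{characsubgrad} only pins down $v$ on the generally small set $\E^\pm_{u(t)}$ modulo $|\Div u(t)|$-null sets, so one must exploit Lemma~\ref{comparE} together with the fact that $|\Div u(t)|$ does not charge $H^1$-polar sets to transfer $v(t)$ into the smaller constraint set $K_t$ and identify it as the unique minimizer there.
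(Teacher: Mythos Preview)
Your proposal is correct and follows essentially the same route as the paper: the set inclusions~\eqref{comparsets} come straight from $w'(t)\le w(t)$, and the comparison $v'(t)\le v(t)$ is obtained by the same lattice argument (checking that $v(t)\vee v'(t)$ and $v(t)\wedge v'(t)$ lie in the appropriate constraint sets, using the Dirichlet-energy identity for min/max, and invoking uniqueness of the minimal-norm subgradient). The only cosmetic difference is that you package the constraint sets as $K_t,K_t'$ and argue that $v(t)$ minimizes over $K_t$, whereas the paper works directly with $\partial\D(u(t))$ and $\partial\D(u'(t))$ via Lemmas~\ref{characsubgrad} and~\ref{comparE}; the substance is identical.
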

\begin{proof} Eqn~\eqref{comparsets} follows at once from
the inequality $w'(t)\le w(t)$ (Lemma~\ref{lemcomparw}).
We deduce, of course, that also $E_r^-(t)\ \subseteq {E'}_r^-(t)$,
and ${E'}_r^+(t)\ \subseteq \ E_r^+(t)$.
Consider the function $v=v'(t)\wedge v(t)=\min\{v'(t),v(t)\}$.
As it is $\pm 1$ on ${E'}_r^\pm(t)$, it follows from Lemmas~\ref{comparE}
and~\ref{characsubgrad} that $-\nabla v\in \partial\D(u'(t))$. In the same
way, $v'=v'(t)\vee v(t)=\max\{v'(t),v(t)\}$ is such that
$-\nabla v'\in \partial \D(u(t))$. Since
\[
\int_A |\nabla v|^2\,dx+\int_A |\nabla v'|^2\,dx
\ =\ \int_A |\nabla v(t)|^2\,dx+\int_A |\nabla v'(t)|^2\,dx\,,
\]
either $\int_A |\nabla v|^2\,dx\le \int_A |\nabla v(t)|^2\,dx$
or $\int_A |\nabla v'|^2\,dx\le \int_A |\nabla v'(t)|^2\,dx$.
By minimality (as $-\nabla v(t)=\partial^0\D(u(t))$)
it follows that $v=v(t)$ and $v'=v'(t)$.
\end{proof}



\subsection{The support of the measure $\Div u$}

Throughout this section we will assume that $\Div u_0$ is a bounded
Radon measure on $A$.

\begin{lemma}\label{lemmeasures}
Let $u_0\in L^2(A;\R^N)\cap \dom \D$, $\delta>0$ and
$u=(I+\delta\partial \D)^{-1}(u_0)$. Then for a positive
Radon measure $\mu\in H^{-1}(A)$, the Radon-Nikodym derivatives of
$\Div u$ and $\Div u_0$ with respect to $\mu$ satisfy
$(\Div u/\mu) (x)\le (\Div u_0/\mu) (x)$
for $\mu$-a.e.~$x\in\E^+_u$, and
$(\Div u/\mu)(x)\ge (\Div u_0/\mu)(x)$ for $\mu$-a.e.~$x\in \E^-_u$.
In particular, $\Div u << \Div u_0$ and
$(\Div u)^\pm \le (\Div u_0)^\pm$.
\end{lemma}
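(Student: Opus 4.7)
The plan is to identify $u$ with the solution of a double obstacle problem and to apply a Lewy--Stampacchia type inequality to the resulting Lagrange multipliers.

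First I would recall that, by the characterization of the resolvent derived in Section~\ref{secdual} combined with Lemma~\ref{characsubgrad}, there is a unique $v \in H^1_0(A;[-1,1])$ minimizing $\tfrac12 \int_A |u_0 + \delta \nabla v|^2\,dx$ subject to the double obstacle $|v|\le 1$, with $v=\pm 1$ quasi-everywhere on $\E^\pm_u$, and such that $u = u_0 + \delta \nabla v$. Taking divergences yields $\Div u = \Div u_0 + \delta \Delta v$ in $H^{-1}(A)$. The Euler--Lagrange equation for the obstacle problem, phrased via the normal cone to the constraint set, then gives a decomposition $\Div u = \lambda^+ - \lambda^-$ with $\lambda^\pm \ge 0$ mutually singular Radon measures concentrated quasi-everywhere on $\{v=\pm 1\}$; in particular $(\Div u)^\pm = \lambda^\pm$ and $\E^\pm_u \subseteq \{v=\pm 1\}$ up to $|\Div u|$-null sets.

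The main step is the Lewy--Stampacchia inequality $\lambda^+ \le (\Div u_0)^+$ and $\lambda^- \le (\Div u_0)^-$ as Radon measures on $A$. I would prove this by penalization: let $v_\epsilon \in H^1_0(A)$ solve
\[
-\delta \Delta v_\epsilon + \epsilon^{-1}(v_\epsilon-1)^+ - \epsilon^{-1}(v_\epsilon+1)^- = \Div u_0
\]
in $H^{-1}(A)$. On the open set $\{v_\epsilon > 1\}$ the function $(v_\epsilon-1)$ has zero boundary trace and satisfies $-\delta\Delta(v_\epsilon-1) + \epsilon^{-1}(v_\epsilon-1) = \Div u_0$; a maximum principle / test-function argument yields $\epsilon^{-1}(v_\epsilon-1)^+ \le (\Div u_0)^+$ globally, and symmetrically $\epsilon^{-1}(v_\epsilon+1)^- \le (\Div u_0)^-$. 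Passing to the limit $\epsilon \to 0^+$, with $v_\epsilon \to v$ strongly in $H^1_0(A)$ and the penalization terms converging weakly to $\lambda^\pm$ as measures, preserves the inequalities. Since $(\Div u_0)^+$ and $(\Div u_0)^-$ are mutually singular, $\lambda^+$ is concentrated where $(\Div u_0)^+$ is, and $\lambda^-$ where $(\Div u_0)^-$ is.

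To conclude, given any positive Radon measure $\mu \in H^{-1}(A)$, I would take Radon--Nikodym derivatives. The inequality $\lambda^+ \le (\Div u_0)^+$ gives $\lambda^+/\mu \le (\Div u_0)^+/\mu = (\Div u_0/\mu)^+$ $\mu$-almost everywhere, and since $\lambda^+$ is carried by the support of $(\Div u_0)^+$, at $\mu$-almost every $x\in\E^+_u$ one has $(\Div u_0/\mu)(x) \ge 0$, whence
\[
(\Div u/\mu)(x) = (\lambda^+/\mu)(x) \le (\Div u_0/\mu)^+(x) = (\Div u_0/\mu)(x).
\]
The symmetric argument on $\E^-_u$ gives the opposite inequality, and the final claims $\Div u \ll \Div u_0$ and $(\Div u)^\pm \le (\Div u_0)^\pm$ are then immediate. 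The main obstacle is making the Lewy--Stampacchia step rigorous when $\Div u_0$ is only a bounded Radon measure rather than an $L^p$ function: a standard workaround is to mollify $\Div u_0$, derive the pointwise bound in the smooth setting via the classical maximum principle, and pass to the limit using weak-$*$ lower semicontinuity of the total variation.
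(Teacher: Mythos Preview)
Your approach via the Lewy--Stampacchia inequality is genuinely different from the paper's. The paper argues pointwise: at each $x\in\E^+_u$ with $\tilde v(x)=1$ (which holds quasi-everywhere, hence $\mu$-a.e.\ since $\mu\in H^{-1}$), it uses that $v$ attains its maximum there to show that the spherical flux $\int_{\partial B(x,r)}\nabla v\cdot\nu\,d\H^{N-1}\le 0$ along a subsequence $r_n\to 0$, and hence $\int_{B(x,r_n)}\Div u\le\int_{B(x,r_n)}\Div u_0$ directly. Dividing by $\mu(B(x,r_n))$ and passing to the limit gives the pointwise inequality without ever splitting into positive and negative parts. Your route, by contrast, establishes the global measure inequalities $(\Div u)^\pm\le(\Div u_0)^\pm$ first and then tries to localise.

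The Lewy--Stampacchia step, if made rigorous for measure data (and you correctly flag this as the delicate point), does yield the ``in particular'' conclusions $(\Div u)^\pm\le(\Div u_0)^\pm$ and $\Div u\ll\Div u_0$. However, your deduction of the pointwise claim has a gap. In the chain
\[
(\Div u/\mu)(x)\;=\;(\lambda^+/\mu)(x)\;\le\;(\Div u_0/\mu)^+(x)\;=\;(\Div u_0/\mu)(x),
\]
the last equality requires $(\Div u_0)^-/\mu=0$ at $\mu$-a.e.\ $x\in\E^+_u$. Your justification (``$\lambda^+$ is carried by the support of $(\Div u_0)^+$'') only tells you that $\lambda^+$ vanishes off a positive set for $\Div u_0$; it does \emph{not} tell you that the Borel set $\E^+_u$ itself lies $\mu$-a.e.\ inside such a positive set. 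A priori, $\E^+_u$ could meet $\{(\Div u_0)^-/\mu>0\}$ in a set of positive $\mu$-measure (on which $\lambda^+/\mu=0$ but $(\Div u_0/\mu)<0$), and then the inequality would fail there. The paper's spherical-average argument is precisely what rules this out, and it is not recovered by the Lewy--Stampacchia bound alone. To close the gap you would need an additional argument that $(\Div u_0)^-$ does not charge $\E^+_u$ in the $\mu$-sense --- essentially a weak maximum-principle statement at the contact set, which is what the paper's proof supplies.
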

\begin{remark}\label{Remdensite}\textup{
It follows from the Lemma that
$\Div u=\theta\Div u_0\restr(\E^+_u\cup \E^-_u)$ for some
weight $\theta(x) \in [0,1]$. We can build explicit
examples where $\theta<1$ at some point.
Consider for instance, in $1D$, $A=(0,1)$ and the function
$u_0(x)=0$ if $x<1/3$ and $x>2/3$, and $2-3x$ if $1/3<x<2/3$.
Then, one shows that $u(t)$ is given by
\[
u(t,x)\ =\begin{cases} 3t & \textup{ if } x<\frac{1}{3}\\
1-2\sqrt{3t}& \textup{ if } \frac{1}{3}<x< 
a(t):= \frac{1}{3}+\frac{2t}{\sqrt{3}}\\
2-3x& \textup{ if } a(t)< x < b(t):=1-\frac{\sqrt{1+6t}}{3}\\
\sqrt{1+6t}-1 & \textup{ if } x> b(t)
\end{cases}
\]
until $t=1-2\sqrt{2}/3$.
We have $\Div u(t)= u(t)_x=(1-2\sqrt{3t}-3t)\delta_{1/3} -3\chi_{(a(t),b(t))}$
for such $t$: $E^+_{u(t)}=\{1/3\}$ stays constant for a while (and disappears
suddenly right after $t=1-2\sqrt{2}/3$), while the density of the
measure $\Div u(t)$ goes down monotonically until it reaches zero
(notice that $v(t)$ will jump right after $1-2\sqrt{2}/3$),
while $E^-_{u(t)}=(\alpha(t),\beta(t))$
shrinks in a continuous way, and carries the constant continuous part
of the initial divergence ($-3$).}
\end{remark}

\begin{proof}
We have $u =u_0 + \delta\nabla v$ with
$-\nabla v \in \partial \D (u)$. Let $x\in \E^+_u$.
Recall that the precise representative of $v$ is defined by
\[
\tilde{v}(x)\ =\ \lim_{\rho\to 0} \frac{\int_{B(x,\rho)}v(y)\,dy}{\omega_N\rho^N},
\]
where $\omega_N=|B(0,1)|$,
and that this limit exists quasi-everywhere in $A$.
We assume also that $\tilde{v}(x)=1$.

Then, for a.e.~$\rho>0$, one may write
\begin{multline}\label{psilepsi0}
\int_{B(x,\rho)} \Div u\,=\,
\int_{\partial B(x,\rho)} u\cdot\nu\,d\H^1
\\
=\,\int_{\partial B(x,\rho)} u_0\cdot\nu\,d\H^1
+\delta \int_{\partial B(x,\rho)}\nabla v\cdot\nu\,d\H^1
\\
=\,\int_{B(x,\rho)}\Div u_0
\,+\,\delta \int_{\partial B(x,\rho)}\nabla v\cdot\nu\,d\H^1.
\end{multline}
Now, let $f(\rho)=(1/\rho^{N-1})\int_{\partial B(x,\rho)} v \,d\H^1$
(which is well-defined for any $\rho$). Then, since $\tilde{v}(x)=1$
and $v\le 1$ a.e.,
\[
\limsup_{\rho\to 0} f(\rho)= N\omega_N\,.
\]
One can also show that for a.e.~$\rho>0$,
$f'(\rho)=(1/\rho^{N-1})\int_{\partial B(x,\rho)} \nabla v\cdot\nu\,d\H^1$,
in fact $f$ is locally $H^1$ in some small interval $(0,\rho_0)$.

Since $v\le 1$ a.e., $f(\rho)\le N\omega_N$ a.e., so that
\begin{multline*}
\liminf_{\e\to 0}
\int_\e^\rho \frac{1}{r^{N-1}}\int_{\partial B(x,r)} \nabla v\cdot\nu\,d\H^1
\,dr
\ =\ \liminf_{\e\to 0}\int_\e^\rho f'(r)\,dr\\
= \ \liminf_{\e\to 0} f(\rho)-f(\e)\ \le\ 0
\end{multline*}
for any $\rho$. If follows that for any $\rho$ small, the set
$I^+_\rho=
\{ r\in [0,\rho]\,:\, \int_{\partial B(x,r)} \nabla v\cdot\nu\,d\H^1\le 0\}$
has positive Lebesgue measure, and for any $r\in I^+_\rho$, we deduce
from~\eqref{psilepsi0} that $\int_{B(x,r)}\Div u\le\int_{B(x,r)}\Div u_0$.

Now consider $\mu$ a positive Radon measure: $\mu$-a.e., we know that the
limits
\[
\frac{\Div u}{\mu}(x)\ =\ \lim_{r\to 0}
 \frac{\int_{B(x,r)}\Div u}{\mu(B(x,r))}
\ \textrm{ and }\ 
\frac{\Div u_0}{\mu}(x)\ =\ \lim_{r\to 0}
 \frac{\int_{B(x,r)}\Div u_0}{\mu(B(x,r))}
\]
exist. If moreover, as before, $x\in \E_u^+$ and $\tilde{v}(x)=1$
(which holds $\mu$-a.e., since $\mu\in H^{-1}(A)$),
we can find a subsequence
$r_n$ such that $\int_{B(x,r_n)}\Div u\le\int_{B(x,r_n)}\Div u_0$
for each $n$, and it follows $(\Div u/\mu)(x)\le(\Div u_0/\mu)(x)$.
\end{proof}

The following corollaries follows:
\begin{corollary}\label{monomeasure}
Let $t>s\ge 0$: then $(\Div u(t))^\pm \le (\Div u(s))^\pm$. In particular,
$\E^\pm_{u(t)}\subseteq \E^\pm_{u(s)}$, $|\Div u(s)|$-a.e.~in $A$.
\end{corollary}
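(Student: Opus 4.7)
The plan is to reduce the statement to Lemma~\ref{lemmeasures} via the semigroup property of the flow. By the general theory for gradient flows of convex functionals in Hilbert space (\cite{Brezis}), the map $u_0\mapsto u(t)$ enjoys the semigroup identity $u(t)=S(t-s)(u(s))$. Combined with Remark~\ref{remsemigroup}, which identifies $S(\tau)$ with the resolvent $(I+\tau\partial\D)^{-1}$ applied to \emph{any} initial datum in $L^2(A;\R^N)$, this yields
\[
u(t)\;=\;\bigl(I+(t-s)\partial\D\bigr)^{-1}\!\bigl(u(s)\bigr)
\qquad\text{for every }t>s\ge 0.
\]
Lemma~\ref{lemmeasures} can therefore be applied with initial datum $u(s)$ and step $\delta:=t-s$, the resulting element being $u(t)$. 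This directly gives $(\Div u(t))^\pm\le (\Div u(s))^\pm$ and $\Div u(t)\ll\Div u(s)$, which is the first assertion of the corollary.

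For the inclusion of the sign sets I would invoke the sharper Radon--Nikodym form of Lemma~\ref{lemmeasures} with $\mu:=|\Div u(s)|$. It gives
\[
\frac{\Div u(t)}{\mu}(x)\;\le\;\frac{\Div u(s)}{\mu}(x)\;=\;\theta_{\Div u(s)}(x)
\qquad\text{for $\mu$-a.e.\ }x\in\E^+_{u(t)}.
\]
On $\E^+_{u(t)}$ we have $\Div u(t)=|\Div u(t)|\ge 0$ by definition of $\theta_{\Div u(t)}$, so the left-hand density is nonnegative. Since $\theta_{\Div u(s)}=-1$ on $\E^-_{u(s)}$, the only way both inequalities can hold on $\E^+_{u(t)}\cap \E^-_{u(s)}$ is for this intersection to carry no $|\Div u(s)|$-mass; in other words, $\E^+_{u(t)}\subseteq \E^+_{u(s)}$ up to a $|\Div u(s)|$-null set. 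The symmetric argument using the second Radon--Nikodym inequality of Lemma~\ref{lemmeasures} on $\E^-_{u(t)}$ yields $\E^-_{u(t)}\subseteq \E^-_{u(s)}$, $|\Div u(s)|$-a.e.

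The only step that really needs a word of care is the first one: strictly speaking, Remark~\ref{remsemigroup} was proved by passing to the limit in the discrete Moreau--Yosida scheme starting from $u_0$, and we need this one-step variational characterization to be a \emph{bona fide} semigroup identity so that it applies again with $u(s)$ in place of $u_0$. Once that point is recorded, the corollary is essentially an immediate consequence of Lemma~\ref{lemmeasures}, since the Radon--Nikodym inequality on $\E^+_{u(t)}$ already encodes both the measure inequality and the set inclusion.
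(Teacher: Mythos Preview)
Your proposal is correct and follows essentially the same route as the paper: both use the resolvent identity $u(t)=(I+(t-s)\partial\D)^{-1}(u(s))$ (the paper states it directly, you justify it via Remark~\ref{remsemigroup} and the semigroup property) and then invoke Lemma~\ref{lemmeasures}. The only difference is cosmetic: the paper applies the Radon--Nikodym inequality with $\mu=(\Div u(t))^+$ to derive the measure comparison, whereas you cite the ``in particular'' clause of Lemma~\ref{lemmeasures} for that part and then take $\mu=|\Div u(s)|$ to handle the set inclusion explicitly---a step the paper's proof leaves implicit.
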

\begin{proof}
Indeed: if $t>s$, then
$u(t)=(I+(t-s)\partial \D)^{-1}(u(s))$. We deduce that for quasi-every
$x\in \E^+_{u(t)}$, $1=\theta_{\Div u(t)}(x)\le (\Div u(s)/(\Div u(t))^+)(x)$,
and it follows
$(\Div u(t))^+\le (\Div u(s)/(\Div u(t))^+) (\Div u(t))^+\le (\Div u(s))^+$.
\end{proof}

\begin{corollary} 
We have that $(\Div u(t))^\pm \wtos (\Div u_0)^\pm$ as $t\to 0$, weakly-$*$
in the sense of measures.
Moreover, $\E^\pm_{u_0}\subset E_r^\pm(0)$ (up to a $|\Div u_0|$-negligible
set), and
$\Div u_0\restr (E_r^+(0))\ge 0$, $\Div u_0\restr (E_r^-(0))\le 0$.
\end{corollary}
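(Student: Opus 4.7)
The plan is to combine the monotonicity from Corollary~\ref{monomeasure} with Lemma~\ref{comparE} and the $L^2$-continuity of the flow at $t=0$. First I would establish the weak-$*$ convergence. The flow $t\mapsto u(t)$ is continuous at $0$ in $L^2(A;\R^N)$, so $\Div u(t)\to\Div u_0$ in $H^{-1}(A)$, hence in the sense of distributions. Corollary~\ref{monomeasure} asserts that $(\Div u(t))^\pm$ are non-increasing in $t$, so as $t\to 0^+$ they increase monotonically to positive Radon measures $\mu^\pm\le(\Div u_0)^\pm$, with $(\Div u(t))^\pm\wtos\mu^\pm$ by monotone convergence applied to each nonnegative $\phi\in C_c(A)$. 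Comparing limits in $(\Div u(t))^+-(\Div u(t))^-=\Div u(t)$ forces $\mu^+-\mu^-=\Div u_0=(\Div u_0)^+-(\Div u_0)^-$; since $(\Div u_0)^+$ and $(\Div u_0)^-$ are mutually singular and $\mu^\pm\le(\Div u_0)^\pm$, uniqueness of the Jordan decomposition yields $\mu^\pm=(\Div u_0)^\pm$.

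Next, for the inclusion $\E^\pm_{u_0}\subset E_r^\pm(0)$, Lemma~\ref{comparE} shows that $(\Div u(t))^+$ is concentrated on $E_r^+(t)\subseteq E_r^+(0)$, so $(\Div u(t))^+(A\setminus E_r^+(0))=0$ for every $t>0$. The monotone convergence from the previous step then gives $(\Div u_0)^+(A\setminus E_r^+(0))=\sup_{t>0}(\Div u(t))^+(A\setminus E_r^+(0))=0$, so $\E^+_{u_0}\subset E_r^+(0)$ up to a $|\Div u_0|$-null set; the negative case is analogous.

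Finally, to get the sign conclusion I would show that $E_r^+(0)\cap E_r^-(0)$ is $|\Div u_0|$-negligible. Since $|\Div u_0|\in H^{-1}(A)$ does not charge sets of $H^1$-capacity zero, the $L^\infty$-Lipschitz estimate $\|w(t)-w(s)\|_{L^\infty(A)}\le|t-s|$ transfers to the precise representatives $|\Div u_0|$-a.e.~along a countable dense set of times, so $\tilde w(s,x)=s$ and $\tilde w(s',x)=-s'$ with $s,s'>0$ cannot coexist outside a $|\Div u_0|$-null set. Combined with the concentration result, this gives $\Div u_0\restr E_r^+(0)=(\Div u_0)^+\restr E_r^+(0)\ge 0$ and similarly $\Div u_0\restr E_r^-(0)\le 0$. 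The main delicate point is the argument of the first step ruling out mass cancellation in the Jordan parts in the limit; phrased via uniqueness of the Jordan decomposition it becomes routine, but the very same ``capacity-vs-measure'' technicality that it relies on reappears in the disjointness argument of the last step.
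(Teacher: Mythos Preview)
Your argument is correct and, for the second assertion, cleaner than the paper's. Both proofs of the weak-$*$ convergence rest on the inequality $(\Div u(t))^\pm\le(\Div u_0)^\pm$ (the paper invokes Lemma~\ref{lemmeasures} directly, you its consequence Corollary~\ref{monomeasure}); but you exploit the full monotonicity in $t$ to obtain \emph{setwise} convergence $(\Div u(t))^\pm(B)\uparrow(\Div u_0)^\pm(B)$ for every Borel set $B$, whereas the paper merely extracts a weak-$*$ convergent subsequence and squeezes it between two bounds. The payoff of your stronger convergence shows in the second step: the paper writes $(\Div u(t))^+=\theta_t\,(\Div u_0)^+$ for a density $\theta_t\in[0,1]$, argues that $\theta_t\uparrow 1$ $(\Div u_0)^+$-a.e., and then checks pointwise (via a Radon--Nikodym computation) that $\theta_t(x)>0$ forces $x\in\E^+_{u(t)}$, before finally invoking Lemma~\ref{comparE}. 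Your route---observing that $(\Div u(t))^+$ is already concentrated on $E_r^+(t)\subseteq E_r^+(0)$ and passing to the setwise limit on the Borel set $A\setminus E_r^+(0)$---bypasses this density analysis entirely.

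One simplification for your last step: the intersection $E_r^+(0)\cap E_r^-(0)$ is in fact empty, with no capacity or countable-dense-times argument needed. If $x\in E^+(s)\cap E^-(s')$ for some $s,s'>0$, then by the monotonicity of $t\mapsto E^\pm(t)$ established right before Lemma~\ref{comparE} one has $x\in E^+(\tau)\cap E^-(\tau)$ for $\tau=\min(s,s')>0$, and $\tilde w(\tau,x)$ cannot equal both $\tau$ and $-\tau$.
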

\begin{proof}
We know that as $t\to 0$, $u(t)\to u_0$ in $L^2(A;\R^N)$, and
thanks to the boundedness of $\Div u(t)$ it follows that $\Div u(t)\wtos
\Div u_0$ in the sense of measures. Now consider a subsequence $(t_k)$
such that $(\Div u(t_k))^+\wtos \mu$, $(\Div u(t_k))^-\wtos \nu$.
Since $\mu-\nu=\Div u_0$, it follows that $\mu\ge (\Div u_0)^+$
and $\nu \ge (\Div u_0)^-$. The reverse inequalities follow from
Lemma~\ref{lemmeasures} and the first part of the thesis follows.

From the previous results we obtain that for each $t$, one
can write
\[
(\Div u(t))^+\ =\ \theta_t(x)(\Div u_0)^+
\]
The function
$\theta_t(x) = \liminf_{\rho\to 0} (\int_{B(x,\rho)}\Div u(t)^+)/(\int_{B(x,\rho)}
(\Div u_0)^+)$ is well-defined on the set $\E^+_{u_0}$ which supports
the measure $(\Div u_0)^+$,
and we find that $\theta_t(x)\le 1$ is nonincreasing in $t$.
Hence there exists for all
$x\in \E^+_{u_0}$ the limit $\lim_{t\to 0} \theta_t(x)=\sup_{t>0}\theta_t(x)$,
and this limit must be $1$
$(\Div u_0)^+$-a.e., otherwise this would contradict that $(\Div u(t))^+
\wtos (\Div u_0)^+$. It follows that up to a $(\Div u_0)^+$-negligible
set, $\E^+_{u_0}\subseteq \bigcup_{t>0} \{x\in \E^+_{u_0}\,:\, \theta_t(x)>0\}$.

Now, if $x\in \E^+_{u_0}$ and $\theta_t(x)>0$, then $x\in \E^+_{u(t)}$:
indeed,
\[
\frac{\int_{B(x,\rho)}\Div u(t)}{\int_{B(x,\rho)}|\Div u(t)|}
\,=\,
\frac{(\Div u(t))^+(B(x,\rho))-(\Div u(t))^-(B(x,\rho))}
{(\Div u(t))^+(B(x,\rho))+(\Div u(t))^-(B(x,\rho))}
\ \stackrel{\rho\to 0}{\longrightarrow} \ 1\,,
\]
since
\begin{multline*}
(\Div u(t))^-(B(x,\rho))\ \le\  (\Div u_0^-)(B(x,\rho))
\\ =\ o((\Div u_0^+)(B(x,\rho)))\ \le\ o((\Div u(t))^+(B(x,\rho)))
\end{multline*}
 (the equality is because $x\in \E^+_{u_0}$, the last inequality
because $\theta_t(x)>0$). It follows that
\[ \E^+_{u_0} \ \subseteq \ \bigcup_{t>0} \E^+_{u(t)} \]
and the conclusion follows from Lemma~\ref{comparE}.
\end{proof}

\subsection{The regular case} 
Let us now assume that $\Div u_0=g\in L^p(A)$, $p>1$.
The obstacle problem which is solved by $w(t)$ can be written
\[
\min_{  w \in H^1_0 : |w|\le t} \half\int_A |\nabla w(x)|^2\,dx\ -\ \int_A g(x)w(x)\,dx\,.
\]

Standard results show that $w(t)\in W^{2,p}(A)$,  (see Theorem  9.9 in \cite{Tru}).
In particular, we have that in the $L^p$ sense,
\[
-\Delta w(t) \ =\ g\chi_{\{|w(t)|<t\}}
\]
and, since $u(t)=u_0+\nabla w(t)$, we deduce that in this case
\begin{equation}\label{evoldiv}
\Div u(t)\ =\ \Div u_0\chi_{E(t)}
\end{equation}
for any $t>0$. In particular, formally, we deduce from~\eqref{eqdiv}
that
\begin{equation}\label{evolset}
\Div u_0\frac{\partial \chi_{E(t)}}{\partial t}\ =\ \Delta v(t)\,,
\end{equation}
and since $\Delta v(t)$ is the jump of the normal derivative of
$v(t)$ on $\partial E^\pm(t)$, we find that these sets shrink with
a normal speed $|\nabla v(t)|/|\Div u_0|$. 

This can be written rigorously in the sense of distributions:
$(E^+,E^-,v)$ are such that $v\in L^1 ([0,T);H^1_0(A;[-1,1]))$,
$v=\pm 1$ on $E^\pm$ for a.e.~$t$ and $x$, and
for any $\phi\in C_c^\infty([0,T)\times A)$,
\begin{multline}\label{weakheleshaw}
\int_A \Div u_0(x)\phi(0,x)\,dx
\,+\, \int_0^T\int_A \Div u_0(x)\chi_{E(t)}(x)
\frac{\partial \phi}{\partial t}(x,t)\,dx\,dt
\\ -\ \int_0^T \int_A \nabla v(t,x)\cdot\nabla \phi(t,x)\,dx\,dt\ =\ 0.
\end{multline}
We observe that the evolution equation \eqref{weakheleshaw} is reminiscent of
the enthalpy formulation of the one-phase Stefan problem \cite{rodrigues-long}.

We expect that with either the additional information that $\Div u_0$
is a.e.~nonnegative on $E^+$ and nonpositive on $E^-$,
or that the maps $E^\pm(t)$ are nonincreasing, then \eqref{weakheleshaw}
characterizes the unique evolution~\eqref{eqdiv}. On the other hand,
without this additional assumption, then a time-reversed evolution
with will satisfy the same weak equation, with $u_0$ replaced with
$-u_0$. With {\em both} assumptions we can actually show the following result:

\begin{proposition}
Let $E^+,E^-$ be measurable subsets of $A\times [0,T]$, and
$v\in L^1 ([0,T);H^1_0(A))$ with $|v|\le 1$ a.e., 
$v=\pm 1$ a.e.~on $E^\pm$, and
satisfying~\eqref{weakheleshaw}. Assume in addition that
$\pm\Div u_0\ge 0$ a.e.~on $E^\pm$, and
\begin{equation}\label{monotonE}
E^\pm(t)\ \subseteq \ E^\pm(s) \qquad \text{for a.e. } t>s\,.
\end{equation}
Then $u(t,x) := u_0(x)+\nabla \int_0^tv (s,x)\,ds$ is the
unique solution of~\eqref{eqdiv}.
\end{proposition}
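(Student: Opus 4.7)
The plan is to verify that $u(t,x) := u_0(x) + \nabla w(t,x)$ with $w(t,x) := \int_0^t v(s,x)\,ds$ is a bona fide solution of the gradient flow~\eqref{eqdiv}; uniqueness will then follow from the semigroup theory of~\cite{Brezis}. Because $v \in L^1([0,T);H^1_0(A))$ with $|v|\le 1$ a.e., $w(t) \in H^1_0(A)$ with $|w(t)|\le t$, and $t\mapsto u(t)$ is absolutely continuous into $L^2(A;\R^N)$ with derivative $u'(t) = \nabla v(t)$ for a.e.\ $t$. Hence the proof reduces, via Lemma~\ref{characsubgrad}, to showing that for a.e.\ $t$ one has $v(t) = \pm 1$ on $\E^\pm_{u(t)}$ up to a $|\Div u(t)|$-null set.

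The first step is to identify $\Div u(t)$. Testing~\eqref{weakheleshaw} against functions of separated form $\phi(t,x) = \eta(t)\psi(x)$, with $\psi\in C_c^\infty(A)$ and $\eta\in C_c^\infty([0,T))$, the weak equation yields
\[
\eta(0)\int_A\Div u_0\,\psi\,dx \,+\, \int_0^T\eta'(t)\,F(t)\,dt \,=\, \int_0^T\eta(t)\int_A\nabla v(t,\cdot)\cdot\nabla\psi\,dx\,dt
\]
for $F(t) := \int_A\Div u_0\,\chi_{E(t)}\psi\,dx$. Reading this as an ODE in the distributional sense in $t$ and using Fubini yields, for a.e.\ $t$,
\[
\int_A \Div u_0\,\chi_{E(t)}\psi\,dx \;=\; \int_A\Div u_0\,\psi\,dx \,-\, \int_A\nabla w(t)\cdot\nabla \psi\,dx\,.
\]
Varying $\psi\in C_c^\infty(A)$ then gives the measure identity $\Div u(t) = \Div u_0 + \Delta w(t) = \Div u_0\,\chi_{E(t)}$, which is the distributional analogue of~\eqref{evoldiv}. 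In particular $u(t)\in \dom \D$.

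The sign hypothesis $\pm\Div u_0\ge 0$ on $E^\pm$ now splits $\Div u(t)$ into its Jordan parts: on $E^+(t)\setminus E^-(t)$ the density of $\Div u(t)$ is nonnegative, on $E^-(t)\setminus E^+(t)$ it is nonpositive, and on $E^+(t)\cap E^-(t)$ it vanishes since $\Div u_0 = 0$ there. Consequently $\E^\pm_{u(t)}\subseteq E^\pm(t)$ up to $|\Div u(t)|$-null sets. Since $v(t) = \pm 1$ a.e.\ on $E^\pm(t)$ and $|v(t)|\le 1$ globally, Lemma~\ref{characsubgrad} delivers $-\nabla v(t)\in \partial\D(u(t))$ for a.e.\ $t$, which is exactly the inclusion needed for $u$ to solve~\eqref{eqdiv}.

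The main obstacle is the passage from the distributional identity to the pointwise-in-$t$ statement $\Div u(t) = \Div u_0\,\chi_{E(t)}$: the sets $E^\pm(t)$ are a priori determined only up to a $t$-null set, so the time-slice $\chi_{E(t)}$ is ambiguous. Here the monotonicity assumption~\eqref{monotonE} is essential — it permits the choice of canonical (say left-continuous) representatives of $t\mapsto E^\pm(t)$ that are compatible with the $L^\infty$-Lipschitz continuity of $t\mapsto w(t)$ recorded in~\eqref{wlip} — and it is precisely what rules out the parasitic time-reversed solutions (with $u_0$ replaced by $-u_0$) mentioned before the statement.
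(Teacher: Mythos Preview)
Your argument is correct and takes a genuinely different route from the paper's. The paper proceeds by showing that $w(t)=\int_0^t v(s)\,ds$ is the minimizer of the obstacle problem~\eqref{obst}: it uses the monotonicity~\eqref{monotonE} to ensure $w(t)=\pm t$ a.e.\ on $E^\pm(t)$, then tests the identity $\int_A\Div u_0\,\phi - \int_{E(t)}\Div u_0\,\phi = \int_A\nabla w(t)\cdot\nabla\phi$ with $\phi-w(t)$ and invokes the sign hypothesis to obtain the variational inequality characterizing~\eqref{obst}; the Theorem then identifies $u_0+\nabla w(t)$ with the gradient flow. You instead verify the differential inclusion $u_t\in -\partial\D(u)$ directly: from the same integrated identity you read off $\Div u(t)=\Div u_0\,\chi_{E(t)}$, the sign hypothesis gives $\E^\pm_{u(t)}\subseteq E^\pm(t)$, and Lemma~\ref{characsubgrad} (together with $\Div u(t)\in L^p$, so that Lebesgue-a.e.\ implies $|\Div u(t)|$-a.e.) yields $-\nabla v(t)\in\partial\D(u(t))$. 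This is shorter and, notably, your first three paragraphs never use~\eqref{monotonE}; in effect you have confirmed the expectation stated just before the Proposition that the sign condition alone characterizes the flow. What the paper's route buys is the explicit link back to the obstacle formulation~\eqref{obst}, which is the variational bridge to the Hele--Shaw interpretation.

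Your final paragraph, however, is confused. You assert that~\eqref{monotonE} is ``essential'' to pin down canonical representatives of $E^\pm(t)$ and appeal to~\eqref{wlip}, but you have not pointed to any step in your own argument that would fail without it, and~\eqref{wlip} is a property of the gradient-flow solution, which is what you are proving---invoking it here is circular (though harmless, since $|w(t)-w(s)|\le|t-s|$ follows directly from $|v|\le 1$). Either drop the paragraph, or state plainly that your argument uses only the sign hypothesis and thus proves a slightly stronger statement.
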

\begin{proof}
Let $w(t)=\int_0^t v(s)\,ds$. Thanks to~\eqref{monotonE}, 
we have that $|w(t,x)|\le t$ for a.e.~$x\in A$, and
$w(t,x)=\pm t$ for a.e.~$x\in E^\pm(t)$, for all $t$.
We can approach test functions of the form $\chi_{[0,t]}\phi(x)$,
$\phi\in H^1_0(A)$, with smooth
functions and pass to the limit to check that
\[
\int_A \Div u_0\phi\,dx-\int_{E(t)}\Div u_0\phi\,dx
\ =\ \int_A \nabla w(t)\cdot\nabla\phi\,dx\,,
\]
for almost all $t$ (up to a negligible set, which we can actually
choose independently of $\phi$, as $H^1_0(A)$ is separable).

If we choose $\phi-w(t,\cdot)$ as the test function in this equation,
we find
\begin{multline*}
\int_A \Div u_0(x)\phi(x)\,dx-\int_A \Div u_0(x) w(x,t)\,dx
-\int_{E(t)}\Div u_0(x)(\phi(x)-w(x,t))\,dx
\\ =\ \int_A \nabla w(t,x)\cdot\nabla\phi(x)\,dx
-\int_A |\nabla w(t,x)|^2\,dx\,,
\\ =\ -\frac{1}{2}\int_A |\nabla w(t,x)-\nabla\phi(x)|^2\,dx+
\frac{1}{2}\int_A |\nabla \phi(x)|^2\,dx
-\frac{1}{2}\int_A |\nabla w(t,x)|^2\,dx\,.
\end{multline*}
If $|\phi|\le t$, we have that $-\Div u_0(x)(\phi(x)-w(x,t))\ge 0$
for a.e.~$x\in E(t)$, so that $w(t)$ is the
minimizer of~\eqref{obst} and the thesis follows.
\end{proof}

\begin{remark}\textup{
As mentioned above,
it is a natural question whether assumption~\eqref{monotonE} is necessary
to prove this result. For instance, in case $E^+$ and $E^-$ are
closed sets in $[0,T)\times A$ with $E^+(t)\cap E^-(t)=\emptyset$ for any $t>0$, 
and $\{ \Div u_0=0\}$ is a negligible set, then one can actually deduce
\eqref{monotonE} from~\eqref{weakheleshaw}.
Indeed, using localized test functions $\phi(x)\chi_{[s,t]}$, one
shows first that $v$ is harmonic in $A\setminus E(t)$ for a.e.~$t$,
and then that $\int_{E(s)}\Div u_0 \phi\,dx- \int_{E(t)}\Div u_0\phi\,dx\ge 0$,
and \eqref{monotonE} follows.
}\end{remark}



\begin{remark}\rm
When $p>N/2$, we can deduce some further properties of $w$
from the regularity theory for the obstacle problem \cite{Caf}.
Indeed, 
letting $\Psi\in H^1_0(A)\cap W^{2,p}(A)$ such that
$-\Delta \Psi=g$, we have that 
$\tilde{w}=w-\Psi\in H^1_0(A)$ solves the obstacle problem
\[
\min_{-t-\Psi\le \tilde{w}\le t-\Psi} \half\int_A |\nabla \tilde{w}(x)|^2\,dx.
\]
Since $p>N/2$, we have $w(t)\in C^{\alpha}(A)$, with $\alpha=2-N/p$,
so that $E(t)=\{|w(t)|=t\}$ is a closed set.
In this case, $v(t)$ can be defined
as the harmonic function in $A\setminus E(t)$ with Dirichlet
boundary condition $v(t)=0$ on $\partial A$ and $v(t)=\pm 1$
on $E^\pm(t)$. Moreover, it is easy to check that $-\nabla v(t)\in \partial^0\D(u(t))$, and $v(t)$
is continuous out of the singular points of $\partial A\cup\partial E(t)$.
\end{remark}
\begin{remark}\rm 
If $A=\R^N$ one can easily show easily by a translation argument 
that $u_0\in H^1(A;\R^N)\Rightarrow u(t)\in H^1(A;\R^N)$
with same norm, so that 
the $H^1$-norm of $u(t)$ is nonincreasing.
In this case, $\E_{u(t)}^+$ is a.e.-equivalent to the support of 
$(\Div u)^+$ and since from the equation it follows
$u=u_0$ a.e. on $E^\pm$ (since $v=\pm 1$ a.e. on $E^\pm$, so
that $\nabla v=0$ a.e., the problem being in general that this
will not be true quasi-everywhere), we deduce that $\Div u=\Div u_0$
a.e.~on $E^+\cup E^-=\textup{spt}(\Div u)$.
\end{remark}

\section{Examples}

\subsection{The antiplane case in dimension 2}\label{secanti}

Let $N=2$ and $k=1$. We have
\[
J(\psi)\ =\ |\rot\psi|(A)\ =\ \sup
\left\{ \int_A \nabla^\perp \cdot\psi\,:\, v\in C_c^\infty(A;[-1,1])
\right\}
\]
where $\rot\psi=\partial_1\psi_2-\partial_2\psi_1$ and 
$\nabla^\perp = (\partial_2,-\partial_1)$. Then, we check easily
that in $L^2(A;\R^2)$ the functional  $J$ is the support function of the closed
convex set
\[
K\ =\ \left\{ \nabla^\perp v\,:\, v\in H^1_0(A;[-1,1])\right\}.
\]
As we mentioned in the Introduction, 
this functional appears as limit of the Ginzburg-Landau model in a suitable energy regime \cite{SS}.

Letting $\psi^\perp = (\psi_2,-\psi_1)$, we
get $J(\psi)=\int_A |\Div \psi^\perp|$, so that the flow can be described as above.

\begin{proposition}\label{propRot}
Let $u_0\in L^2(A;\R^2)$ with $\rot u_0=g\in L^p(A)$, $p>1$. Then for
$t>0$ there exist nonincreasing left-continuous closed (and disjoint)
sets $E^\pm(t)
\subset \{\pm g \ge 0\}$, such that $\rot u(t)=\rot u_0(\chi_{E^-(t)\cup E^+(t)})$.
Moreover, letting $E^\pm=\overline{\cup_{t\ge 0} \{t\}\times E^\pm(t)}$,
there exists a function $v(t,x)$ with $v= \pm 1$ a.e.~on $E^\pm$
such that $(E^+,E^-,v)$ are the unique closed sets and function
solution of the weak Hele-Shaw flow~\eqref{weakheleshaw}.
\end{proposition}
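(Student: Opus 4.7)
The plan is to reduce Proposition \ref{propRot} to the analysis of Section \ref{seccase} via the pointwise rotation $\psi\mapsto \psi^\perp=(\psi_2,-\psi_1)$, for which $\Div \psi^\perp=\rot\psi$. This identifies the convex set $K$ above with the convex set defining $\D$, so that $J(\psi)=\D(\psi^\perp)$ and the $L^2$-flow $u(t)$ of $J$ starting from $u_0$ is conjugated to the $\D$-flow $\tilde u(t)$ starting from $\tilde u_0:=u_0^\perp$ via $u(t)^\perp=\tilde u(t)$ for all $t\ge 0$.

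Since $\Div \tilde u_0=\rot u_0=g\in L^p(A)$ with $p>1=N/2$, I would then invoke the ``regular case'' results of Section \ref{seccase} applied to $\tilde u_0$. The H\"older regularity of the obstacle-problem solution $w(t)$ from the last remark of that subsection implies that $E^\pm(t):=\{\tilde w(t)=\pm t\}$ are closed, and they are disjoint for $t>0$ by construction. Lemma \ref{comparE} (and the discussion preceding it) gives that $t\mapsto E^\pm(t)$ are nonincreasing and left-continuous, while \eqref{evoldiv} yields
\[
\rot u(t)\,=\,\Div \tilde u(t)\,=\, g\,\chi_{E^+(t)\cup E^-(t)}.
\]
For the inclusion $E^\pm(t)\subseteq\{\pm g\ge 0\}$ I would argue by contradiction: if $g<0$ on a positive-measure subset $F\subseteq E^+(t)$, then $\Div \tilde u(t)<0$ on $F$, so $F\subseteq \E^-_{\tilde u(t)}\subseteq E_r^-(t)\subseteq E^-(t)$ by Lemma \ref{comparE}, which contradicts $E^+(t)\cap E^-(t)=\emptyset$.

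For the Hele-Shaw part, take $v(t,x)$ to be the function supplied by the flow, i.e.\ $-\nabla v(t)=\partial^0\D(\tilde u(t))$; it satisfies $|v|\le 1$ and $v=\pm 1$ a.e.\ on $E^\pm$ by Lemma \ref{comparE}, and the identity \eqref{weakheleshaw} is precisely the weak Hele-Shaw formulation for this triple. Uniqueness then follows from the preceding proposition on weak Hele-Shaw and its subsequent remark: the sets $E^\pm$ are closed, disjoint on each time slice $t>0$, the sign condition $\pm g\ge 0$ a.e.\ on $E^\pm$ was just proved, and the monotonicity \eqref{monotonE} holds by construction.

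The step I expect to be the most delicate is the uniqueness when $\{g=0\}$ has positive Lebesgue measure, a case in which the hypothesis of the remark following the weak Hele-Shaw proposition is not met. I would bypass this by observing that Lemma \ref{lemmeasures} gives $\Div \tilde u(t)\ll \Div \tilde u_0$, so no divergence can be generated on $\{g=0\}$ and both \eqref{eqdiv} and the obstacle problem decouple across $\{g\ne 0\}$ and $\{g=0\}$; the argument of the remark then applies on $\{g\ne 0\}$, which suffices to conclude.
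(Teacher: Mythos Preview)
Your approach is exactly the one the paper intends: the paper gives no separate proof of Proposition~\ref{propRot} but simply notes that the rotation $\psi\mapsto\psi^\perp$ turns $J$ into $\D$, so that \emph{``the flow can be described as above''}, i.e.\ every assertion is read off from Section~\ref{seccase} with $\Div\tilde u_0=\rot u_0=g\in L^p$, $p>1=N/2$. Your list of ingredients (obstacle-problem regularity for closedness, Lemma~\ref{comparE} for monotonicity and left-continuity, \eqref{evoldiv} for the formula for $\rot u(t)$, the weak Hele-Shaw proposition for uniqueness) is the right one. Your argument for $E^\pm(t)\subset\{\pm g\ge 0\}$ is fine; a slightly shorter route is to use directly the last display of Lemma~\ref{comparE}, which gives $\Div\tilde u(t)\restr(E_r^-(t))^c\ge 0$, together with $E^+(t)\cap E^-(t)=\emptyset$ and \eqref{evoldiv}.

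One comment on your final paragraph: the ``decoupling across $\{g\neq 0\}$ and $\{g=0\}$'' does not work, since the obstacle problem \eqref{obst} is nonlocal and Lemma~\ref{lemmeasures} constrains only the specific flow $\tilde u(t)$, not a hypothetical competitor $(E'^+,E'^-,v')$ in the uniqueness statement. You are right, however, that this is the delicate point. The paper does not treat it either: the uniqueness assertion in Proposition~\ref{propRot} should be understood within the class of weak solutions satisfying the sign and monotonicity assumptions of the preceding proposition (or, via the subsequent remark, closed disjoint $E^\pm(t)$ together with $|\{g=0\}|=0$), rather than as an unconditional statement.
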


\subsection{The one-dimensional Total Variation Flow}\label{secone}

Let now $N=1$, $k=0$: the previous analysis also provides interesting
qualitative information on the behavior of the flow of the Total
Variation, in dimension 1.

We consider $u_0\in L^2((a,b))$, $a<b$, and the flow $u(t)$ of
the total variation $J(u):=\sup\{\int_a^b u v'\,dt\,:\, v\in C_c^\infty
(a,b;[-1,1])\}$.
Notice that in this situation, the function $w$ which minimizes \eqref{obst},
being in $H^1_0(a,b)$, is also in $C^{1/2}([a,b])$ with $w(a)=w(b)=0$.
In particular, the sets $E^\pm(t)$ defined in~\eqref{defEpm} are closed,
disjoint sets compactly contained in $(a,b)$.

We can state the following result.
\begin{proposition}\label{propTV1}
The function $u(t)$ is the unique minimizer of
\[
\min_{u} J(u)\,+\,\frac{1}{2t} \int_a^b |u-u_0|^2\,dx.
\]
Moreover there exist nonincreasing, disjoint closed sets $E^\pm(t)\subset (a,b)$
such that $u(t)=u_0$ a.e.~on $E^\pm(t)$, $u_0$ is nondecreasing on any
interval contained in $E^+(t)$, nonincreasing on any interval contained
in $E^-(t)$, and $u(t)$ is constant on each connected component
of $(a,b)\setminus (E^+(t)\cup E^-(t))$.
\end{proposition}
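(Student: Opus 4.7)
The plan is to specialize the analysis of Section~\ref{seccase} to $N=1$. In one dimension $\Div u=u'$, so the functional $\D(u)$ coincides with the total variation $J(u)$ and every result proved for $k=N-1$ applies verbatim to the flow under consideration. The first assertion of the proposition is then immediate from Remark~\ref{remsemigroup}: $u(t)=u_0+w'(t)$ is the unique minimizer of $J(u)+\frac{1}{2t}\int_a^b|u-u_0|^2\,dx$.

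For the structural claims, I would define $E^\pm(t)$ as in \eqref{defEpm}, using the precise representative of $w(t)\in H^1_0(a,b)$. Since $H^1_0(a,b)\hookrightarrow C^{1/2}([a,b])$ with $w(t)(a)=w(t)(b)=0$, this representative is genuinely continuous on $[a,b]$ and vanishes at the endpoints; hence the level sets $E^\pm(t)=\{w(t)=\pm t\}$ are closed, compactly contained in $(a,b)$, and (for $t>0$) disjoint. The nonincreasing character $E^\pm(t)\subseteq E^\pm(s)$ for $s\le t$ is exactly the content of Lemma~\ref{comparE}.

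Next I would analyse the three types of regions. On any open interval $I\subseteq E^+(t)$ the function $w(t)$ is constantly equal to $t$, so $w'(t)=0$ a.e.~on $I$ and consequently $u(t)=u_0+w'(t)=u_0$ a.e.~on $I$. To upgrade this to monotonicity of $u_0$ on $I$, I would use that $E^+(t)\cap E^-(t)=\emptyset$ and $E_r^-(t)\subseteq E^-(t)$ imply $I\subseteq(E_r^-(t))^c$; Lemma~\ref{comparE} then gives $\Div u(t)\restr(E_r^-(t))^c\ge 0$, so $u(t)'\ge 0$ as a Radon measure on $I$. Since $\Div u(t)$ is a bounded Radon measure, $u(t)$ is BV on $(a,b)$; its pointwise representative is therefore nondecreasing on $I$, and as it agrees a.e.~with $u_0$ we conclude that $u_0$ admits a nondecreasing representative on $I$. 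The case $I\subseteq E^-(t)$ is symmetric. Finally, if $I$ is a connected component of $(a,b)\setminus(E^+(t)\cup E^-(t))$, then $I\subseteq(E_r^+(t)\cup E_r^-(t))^c$, so Lemma~\ref{comparE} yields $u(t)'=0$ on $I$, i.e.~$u(t)$ is constant on $I$.

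The heavy lifting has already been carried out in Section~\ref{seccase}: the obstacle-problem reformulation, the Lipschitz continuity of $t\mapsto w(t)$ in $L^\infty$, the subdifferential characterization in Lemma~\ref{characsubgrad}, and the sign constraints on $\Div u(t)$ coming from Lemma~\ref{comparE}. The only steps requiring care are therefore the measure-theoretic bookkeeping that converts the inequality \emph{``$\Div u(t)\ge 0$ on $I$''} into a pointwise monotonicity statement for the representative of $u_0$, and the verification that the one-dimensional sets $E^\pm(t)$ are closed, compactly contained in $(a,b)$, and disjoint. Neither is a serious analytic obstacle; all the genuine difficulty is absorbed into the lemmas from the previous sections.
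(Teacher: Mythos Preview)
Your proof is essentially correct and follows the same route as the paper: Remark~\ref{remsemigroup} for the first assertion, the embedding $H^1_0(a,b)\hookrightarrow C^{1/2}([a,b])$ to guarantee that $E^\pm(t)$ are closed, disjoint, and compactly contained in $(a,b)$, and Lemma~\ref{comparE} for the sign constraints on $Du(t)$ that give the monotonicity and constancy claims.

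One small point to tighten: you establish $u(t)=u_0$ only on open \emph{intervals} $I\subseteq E^+(t)$, whereas the proposition asserts it a.e.\ on the whole set $E^\pm(t)$. The fix is immediate: since $w(t)\le t$ everywhere and $w(t)(x)=t$ for every $x\in E^+(t)$, each such $x$ is a global maximum of the continuous function $w(t)$, hence $w'(t)(x)=0$ at every point of $E^+(t)$ where $w(t)$ is classically differentiable, i.e.\ a.e.\ on $E^+(t)$.

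There is also a minor organizational difference. The paper splits into the cases $u_0\in BV(a,b)$ and $u_0\notin BV(a,b)$, handling the latter by replacing $u_0$ with $u(\varepsilon)\in BV(a,b)$ and sending $\varepsilon\to 0$. You sidestep this by observing directly that $u(t)\in\dom\partial J\subset\dom J$ for every $t>0$, so $u(t)\in BV(a,b)$ and the measure $Du(t)$ is available from the start; this is slightly more economical and amounts to the same idea.
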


If $u_0$ is smooth enough, one can also characterize the speed of the
boundary points of $E^\pm(t)$ in term of $u_0$ and the size of
the intervals of $(a,b)\setminus (E^+(t)\cup E^-(t))$.
\begin{proof}
The first part of the thesis is a consequence of Remark~\ref{remsemigroup}.
Then, if $u_0\in BV(a,b)$, the thesis is a consequence of Lemma~\ref{comparE}.
Indeed, for a.e. $x$ on $E^\pm(t)$, we have
$\partial_x w(t,x)=0$ and $u(t,x)=u_0(x)+\partial_x w(t,x)=u_0(x)$.
If $I\subset E^+(t)$ is an interval, since the measure $Du(t)\restr I$
must be nonnegative, $u(t)$ is nondecreasing on $I$, but as $u(t)=u_0$
a.e.~on $I$ it follows that $u_0$ is nondecreasing on $I$.

If $u_0\not\in BV(a,b)$, we use the fact that for all $\e>0$,
$u(\e)\in BV(a,b)$. Then the Proposition holds for $t>\e$, and
we have $u(t)=u(\e)$ a.e.~on $E^\pm(t)$, $u(\e)$ is nondecreasing on any
interval contained in $E^+(t)$, nonincreasing on any interval contained
in $E^-(t)$, and $u(t)$ is constant on each connected component
of $(a,b)\setminus (E^+(t)\cup E^-(t))$. The sets do not depend
on $\e$, as they are defined as the contact sets in~\eqref{obst}.
Sending then $\e\to 0$ we deduce the result.
\end{proof}

We can deduce the following, quite interesting result
--- see also~\cite{Ring,BoFi,Rybkaetal} for other results on the 
one-dimensional Total Variation flow and in particular~\cite[Prop.~4]{Ring}
for a similar statement.

\begin{corollary}
Let $u_0=\bar u_0+n$ where $\bar u_0\in BV(a,b)$ and 
$n$ is a stochastic process $(a,b)$ with $n\in L^2(a,b)$
a.s.~and such that $|Dn|(I)=+\infty$ for any interval
$I\subset (a,b)$, almost surely.
Let $u(t)$ be the total variation flow
starting from $u_0$. Then almost surely, at $t>0$, there is ``staircaising'' 
everywhere in the interval $(a,b)$: $u(t)$ is constant on each connected
component of an open set $A(t)$ which is dense in $(a,b)$.
\end{corollary}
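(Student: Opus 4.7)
The plan is to combine Proposition~\ref{propTV1} with the fact that, under the noise hypothesis, $u_0$ cannot admit a monotone representative on any nonempty open interval. Proposition~\ref{propTV1} already furnishes, for each $t>0$, disjoint closed sets $E^\pm(t)\subset(a,b)$ with $u(t)$ constant on each connected component of the open complement $A(t):=(a,b)\setminus(E^+(t)\cup E^-(t))$, so everything reduces to proving that, almost surely, $E^+(t)\cup E^-(t)$ has empty interior for every $t>0$.

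First I would fix the underlying almost-sure event. Enumerate the countable family $\mathcal{I}$ of open subintervals of $(a,b)$ with rational endpoints. By hypothesis, for each $I\in\mathcal{I}$ the event $\{|Dn|(I)=+\infty\}$ has probability one, and hence so does the countable intersection $\Omega^*:=\bigcap_{I\in\mathcal{I}}\{|Dn|(I)=+\infty\}$. Since every nonempty open interval $J\subset(a,b)$ contains some $I\in\mathcal{I}$ and $|Dn|$ is monotone under inclusion of Borel sets, on $\Omega^*$ we have $|Dn|(J)=+\infty$ for \emph{every} nonempty open interval $J\subset(a,b)$.

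Next I would argue on $\Omega^*$ by contradiction. Fix $t>0$ and suppose $E^+(t)$ contains a nonempty open interval $J$ (the $E^-(t)$ case is symmetric). By Proposition~\ref{propTV1}, $u_0$ admits a nondecreasing representative on $J$, so $u_0|_J\in BV(J)$; writing $n|_J=u_0|_J-\bar u_0|_J$ with $\bar u_0\in BV(a,b)$, this forces $n|_J\in BV(J)$ and in particular $|Dn|(J)<+\infty$, contradicting the choice of $\Omega^*$. Hence on $\Omega^*$ the closed set $E^+(t)\cup E^-(t)$ has empty interior, so $A(t)$ is open and dense in $(a,b)$, and $u(t)$ is constant on each connected component of $A(t)$ by Proposition~\ref{propTV1}.

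The only real subtlety is the uniformity-in-$t$ quantifier: the hypothesis provides infinite variation interval-by-interval almost surely, whereas the conclusion must hold simultaneously for all $t>0$ on one event of probability one. The countable-intersection reduction to rational intervals dispatches this cleanly, because the contradiction is extracted from a pointwise property of $u_0$ that is independent of $t$. In the natural situation $u_0\notin BV(a,b)$, one applies Proposition~\ref{propTV1} to $u(\e)$ for small $\e>0$ and uses $u(\e)=u_0$ a.e.\ on $E^\pm(t)$ for $t>\e$; since $u(\e)|_J$ inherits a nondecreasing representative, so does $u_0|_J$, and the rest of the argument goes through unchanged after letting $\e\to 0$.
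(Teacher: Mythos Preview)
Your argument is correct and follows the same route as the paper: set $A(t)=(a,b)\setminus(E^+(t)\cup E^-(t))$, invoke Proposition~\ref{propTV1} to make $u_0$ monotone on any interval inside $E^\pm(t)$, and contradict the infinite-variation hypothesis on $n$; your explicit reduction to a countable family of rational intervals is a welcome clarification of the almost-sure event, which the paper leaves implicit. One small correction: monotonicity of $u_0$ on an open interval $J$ yields only $u_0\in BV_{\mathrm{loc}}(J)$, not $u_0\in BV(J)$, so (as the paper does) pass first to a subinterval $I\Subset J$ before concluding $|Dn|(I)<+\infty$.
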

\begin{remark} {\rm The property that $|Dn|(I)=+\infty$ for any interval $I$, almost
surely, is satisfied for instance by the Wiener process (as its quadratic
variation is positive a.s.). For a Gaussian stationary process, it
will depend on the behaviour of the autocorrelation function and can
be characterized by conditions on the power spectrum of the process, see for instance~\cite{Belyaev}
for (non sharp) conditions. 
}\end{remark}

\begin{proof}
We let $A(t)=(a,b)\setminus (E^+(t)\cup E^-(t))$, and from the previous
result we know that $u(t)$ is constant on each connected component
of $A(t)$ while $u=u_0$ on $(a,b)\setminus A(t)$. Now assume there
is an interval $I$ with $I\cap A(t)=\emptyset$: without loss of
generality we may assume that $I\subset E^+(t)$. Then $u_0$ must
be nondecreasing on $I$, in particular there exists $I'\subset I$
with $|Du_0|(I')<+\infty$. But this yields that $|Dn|(I')<+\infty$,
which is a.s.~impossible.
\end{proof}


\end{document}